\documentclass[a4paper,12pt,reqno]{amsart}

\usepackage{amsmath,amsfonts,amssymb,amsthm,enumerate,multicol}
\usepackage{mathtools}
\usepackage{tikz,graphicx}
\usepackage{float,wrapfig}
\usepackage{epstopdf}
\usepackage{enumitem,wasysym}
\usepackage[labelsep=space,font=footnotesize]{caption}
\usepackage[english]{babel}
\usepackage[autostyle]{csquotes}
\usepackage{autobreak}
\usepackage{xcolor}
\usepackage[colorlinks=true,
            citecolor=green!60!black,
            linkcolor=blue,
            urlcolor=blue]{hyperref}
\usepackage{cleveref}

\newtheorem{definition}{Definition}[section]
\newtheorem{theorem}{Theorem}[section]

\newtheorem{lemma}{Lemma}[section]
\newtheorem{example}{Example}[section]
\newtheorem{remark}{Remark}[section]

\theoremstyle{plain}
\newtheorem{thm}{Theorem}[section]

\newtheorem{prop}[thm]{Proposition}

\allowdisplaybreaks
\numberwithin{equation}{section}

\newcommand{\N}{\mathbb{N}}

\newcommand{\Min}{M_{1}^{\mathrm{in}}}
\newcommand{\win}{\omega_{i}^{\mathrm{in}}}
\newcommand{\Tgel}{T_{\mathrm{gel}}}
\newcommand{\1}{\mathbf{1}}

\title[Gelation and Positivity for the Discrete OHS Coagulation Equation]{Gelation and Positivity of Solutions to the Discrete Oort--Hulst--Safronov Coagulation Equation}

\author{Mashkoor Ali}
\address{Jindal Global Business School, O.P. Jindal Global University,
Sonipat--131001, Haryana, India}
\email{mashkoor.ali@jgu.edu.in}

\keywords{Coagulation equation,
Gelation, Weak formulation, Mass conservation, Positivity of Solution.}

\subjclass[2020]{34A12, 34K30}

\begin{document}

\begin{abstract}
Motivated by the recent deterministic approach of Fournier~\cite{F2025}
to gelation for the continuous Smoluchowski coagulation equation, we
adapt his method to the discrete Oort--Hulst--Safronov (OHS) coagulation
system.  We show that under a suitable condition on the coagulation
kernel, every solution with finite initial mass loses mass in finite
time, and we give an explicit bound on the gelation time.  We also prove gelation in the critical logarithmic case and provide a sufficient condition for mass conservation.  Finally, we study the positivity of solutions and show that, for any positive time, a cluster size has positive concentration if and only if it is at least as large
as the smallest cluster present initially.
\end{abstract}

\maketitle
 
\section{Introduction}
\label{sec:intro}

In~\cite{PBD1 1999,PBD 1999}, Dubovski\v{i} investigated a dispersed particulate system and proposed the discrete Oort--Hulst--Safronov (OHS) coagulation model (also known as the Safronov--Dubovski\v{i} coagulation model), which is the discrete analogue of the continuous Oort--Hulst--Safronov (OHS) coagulation model introduced in~\cite{OH 1946,SV 1972}. The continuous OHS model has been extensively studied in~\cite{BP 2007,BPA 2022,PBD1 1999,LLW 2003,PL 2005,PL 2006}, where the issues of well-posedness and various qualitative properties of solutions are rigorously addressed.

In the discrete model, only binary collisions between clusters can occur
simultaneously, and the mass of each cluster is assumed to be
proportional to some $m_{0}>0$, which is the smallest cluster in the
system.  A cluster with mass $im_{0}$ is called an \emph{$i$-mer}.
When a $j$-mer ($j\leq i$) collides with an $i$-mer, the $j$-mer splits
into $j$ monomers, one of which merges with the $i$-mer to produce an
$(i+1)$-mer, while the remaining $j-1$ monomers are released back into
the system.  Denoting by $\omega_{i}(t)\geq 0$ the concentration of
$i$-mers at time $t\geq 0$ and by $\Lambda_{i,j}=\Lambda_{j,i}\geq 0$
the collision rate between $i$-mers and $j$-mers, this mechanism yields
the \emph{discrete OHS coagulation equations} as
\begin{align}
\frac{d\omega_{i}(t)}{dt}
  &= \omega_{i-1}(t)\sum_{j=1}^{i-1}j\,\Lambda_{i-1,j}\,\omega_{j}(t)
   - \omega_{i}(t)\sum_{j=1}^{i}j\,\Lambda_{i,j}\,\omega_{j}(t)
   - \sum_{j=i}^{\infty}\Lambda_{i,j}\,\omega_{i}(t)\,\omega_{j}(t),
   \quad i\in\N,
   \label{eq:DOHSE}\\
\omega_{i}(0)&=\omega_{i}^{\mathrm{in}}\geq 0,\quad i\in\N.
   \label{eq:IC}
\end{align}
The three terms on the right-hand side of~\eqref{eq:DOHSE} encode
distinct physical mechanisms.  The first term describes the creation of
an $i$-mer when an $(i-1)$-mer absorbs one of the $j$ monomers released
by a fragmenting $j$-mer; it is absent when $i=1$.  The second term
accounts for the loss of an $i$-mer that absorbs a monomer (released by
a fragmenting $j$-mer with $j\leq i$) and thereby becomes an $(i+1)$-mer.
The factor $j$ multiplying $\Lambda_{i,j}$ in both of these sums reflects
the number of monomers available from each fragmenting $j$-mer, and is a
structural feature distinctive to the Safronov-Dubovski\v{i} model.
The third term represents the destruction of an $i$-mer when it collides
with a larger $j$-mer ($j\geq i$) and itself breaks into $i$ monomers.

The \emph{total mass} of the system is
$M_{1}(t):=\sum_{i=1}^{\infty}i\,\omega_{i}(t)$.
A formal computation, which we carry out in detail in
Section~\ref{sec:massgel}, suggests that $M_{1}(t)=\Min:=\sum_{i=1}^{\infty}i\,\win$
for all $t\geq 0$, i.e., mass is conserved.  However, when
$\Lambda_{i,j}$ grows sufficiently rapidly for large $i,j$,
infinite-mass clusters may form in finite time, causing
$M_{1}(t)<\Min$; this is \emph{gelation}, and the first time at
which mass is lost is the \emph{gelation time}
$\Tgel:=\inf\{t\geq 0:M_{1}(t)<\Min\}$.

The gelation problem for coagulation equations has a rich history. 
For a comprehensive mathematical treatment of both the continuous
Smoluchowski coagulation equation and its discrete counterparts,
we refer to~\cite{AD 1999, BLL 2019, DC2015,RLD1972}; for the stochastic
counterpart, see~\cite{AIR2023, R2013} and the references therein. For the discrete OHS system specifically, the well-posedness of solutions and the gelation phenomenon were investigated in~\cite{MRG 2024,AG 2023,BAG 2005,DAS 2022,DAV 2014,KKC 2023}.

The present work adapts Fournier's approach~\cite{F2025} to the discrete
system~\eqref{eq:DOHSE}--\eqref{eq:IC}, providing a short, self-contained
deterministic proof of gelation.  The key novelty in the discrete setting
lies in the structure of the weak formulation.  For the Smoluchowski
equation the flux coefficient $\Delta\psi(x,y)=\psi(x+y)-\psi(x)-\psi(y)$
is symmetric in $x$ and $y$, reflecting the symmetric merge of two
clusters.  By contrast, the asymmetric collision mechanism
of~\eqref{eq:DOHSE} yields a flux coefficient
$\Phi_{\psi}(i,j)=(\psi_{i+1}-\psi_{i})\,j-\psi_{j}$ (defined for
$i\geq j$), which is inherently asymmetric.  Despite this difference,
the sign properties of $\Phi_{\psi}$ for the test sequence
$\psi_{i}=i\wedge a$ are sufficient to derive the same scale-by-scale
energy estimate that drives the gelation argument in~\cite{F2025}.  Our
main result (Theorem~\ref{thm:main}) provides, under an explicit
summability condition on the kernel, a finite upper bound on $\Tgel$
valid for all solutions simultaneously.\\
The paper is organized as follows.  Section~\ref{sec:prelim} fixes
notation and the solution concept, and derives the weak formulation.
Section~\ref{sec:massgel} establishes the mass bound and the main
finite-time gelation theorem, with an explicit estimate on the
gelation time and illustrative kernel classes.
Section~\ref{sec:criticality} treats the critical logarithmic case,
proving gelation for $\alpha>1$ and a complementary condition for
mass conservation.  Finally, Section~\ref{sec:positivity} shows that
the positivity set is a half-line determined by the minimal initial
cluster size.

\section{Preliminaries}
\label{sec:prelim}

For a sequence $(\omega_{i})_{i\geq 1}$ of nonnegative real numbers
and $k\geq 0$, the \emph{$k$-th moment} is
$M_{k}[\omega]:=\sum_{i=1}^{\infty}i^{k}\omega_{i}$, and we write
$M_{k}(t):=M_{k}[\omega(t)]$ and
$M_{k}^{\mathrm{in}}:=M_{k}[\omega^{\mathrm{in}}]$.  In particular,
$M_{1}(t)=\sum_{i=1}^{\infty}i\,\omega_{i}(t)$ is the total mass and
$M_{0}(t)=\sum_{i=1}^{\infty}\omega_{i}(t)$ is the total cluster number.

\begin{definition}\label{def:solution}
A family $(\omega_{i}(t))_{i\in\N,\,t\geq 0}$ of nonnegative real
numbers is a \emph{solution} to~\eqref{eq:DOHSE}--\eqref{eq:IC} if,
for each $i\in\N$, the map $t\mapsto\omega_{i}(t)$ is locally absolutely
continuous on $[0,\infty)$; for every $T>0$ and $i\in\N$,
\[
   \int_{0}^{T}\!\!\left[\omega_{i}(s)\sum_{j=1}^{i}j\,\Lambda_{i,j}
   \,\omega_{j}(s)+\omega_{i}(s)\sum_{j=i}^{\infty}\Lambda_{i,j}
   \,\omega_{j}(s)\right]ds<\infty;
\]
equation~\eqref{eq:DOHSE} holds for Lebesgue-almost every $t\geq 0$
and every $i\in\N$; and $\omega_{i}(0)=\win$ for all $i\in\N$.
\end{definition}

The integrability condition ensures that every sum on the right-hand
side of~\eqref{eq:DOHSE} is absolutely convergent in
$L^{1}_{\mathrm{loc}}([0,\infty))$ for each fixed $i$, while the
absolute continuity of $t\mapsto\omega_{i}(t)$ permits integration in
time and interchange of finite sums with time-derivatives by the
dominated convergence theorem.

The following identity, which encodes the full dynamics
of~\eqref{eq:DOHSE} in terms of a single weighted-sum equation for
any bounded test sequence, is the central analytical tool of the paper.

\begin{lemma}\label{lem:weak}
Let $(\omega_{i}(t))$ be a solution in the sense of
Definition~\ref{def:solution}, and let $(\psi_{i})_{i\geq 1}$ be any
bounded sequence of real numbers.  Then for all $t\geq 0$,
\begin{equation}\label{eq:weak}
   \sum_{i=1}^{\infty}\psi_{i}\,\omega_{i}(t)
   =\sum_{i=1}^{\infty}\psi_{i}\,\win
   +\int_{0}^{t}\sum_{\substack{i,j\geq 1\\i\geq j}}
   \Phi_{\psi}(i,j)\,\Lambda_{i,j}\,\omega_{i}(s)\,\omega_{j}(s)\,ds,
\end{equation}
where the \emph{discrete flux coefficient}, defined for $i\geq j\geq 1$,
is
\begin{equation}\label{eq:Phi-def}
   \Phi_{\psi}(i,j):=(\psi_{i+1}-\psi_{i})\,j-\psi_{j}.
\end{equation}
\end{lemma}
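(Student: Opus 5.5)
The plan is to prove \eqref{eq:weak} first for the truncated test sequence $\psi\1_{\{i\le N\}}$, where only finitely many equations of \eqref{eq:DOHSE} are involved, and then to let $N\to\infty$. For fixed $N$, the absolute continuity in Definition~\ref{def:solution} lets us integrate $\frac{d}{dt}\sum_{i=1}^{N}\psi_i\omega_i$ over $[0,t]$ term by term. The integrability condition in Definition~\ref{def:solution} makes every series on the right-hand side of \eqref{eq:DOHSE} an $L^1(0,t)$ function, so we may freely regroup finitely many of them.

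The algebra of the truncated identity goes as follows.
\begin{itemize}
\item In the gain term we shift the index $i\mapsto i+1$. This gives $\sum_{i=1}^{N-1}\psi_{i+1}\,\omega_i\sum_{j\le i} j\Lambda_{i,j}\omega_j$.
\item Combining this with the first loss term $-\sum_{i=1}^{N}\psi_i\,\omega_i\sum_{j\le i}j\Lambda_{i,j}\omega_j$ produces $\sum_{i\le N-1}\sum_{j\le i}(\psi_{i+1}-\psi_i)\,j\,\Lambda_{i,j}\omega_i\omega_j$. It leaves the boundary term $-\psi_N\,\omega_N\sum_{j\le N}j\Lambda_{N,j}\omega_j$.
\item In the fragmentation term $-\sum_{i\le N}\psi_i\,\omega_i\sum_{j\ge i}\Lambda_{i,j}\omega_j$ we swap the names $i\leftrightarrow j$ and use the symmetry $\Lambda_{i,j}=\Lambda_{j,i}$. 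It becomes $-\sum_{j\le N}\sum_{i\ge j}\psi_j\Lambda_{i,j}\omega_i\omega_j$.
\item Splitting the latter into the parts $i\le N-1$ and $i\ge N$ yields $\int_0^t\sum_{j\le i\le N-1}\Phi_\psi(i,j)\Lambda_{i,j}\omega_i\omega_j\,ds$ plus two remainder terms, $R_N^{(1)}$ and $R_N^{(2)}$.
\end{itemize}
The remainders are
\[
R_N^{(1)}=-\psi_N\int_0^t\omega_N\sum_{j\le N}j\Lambda_{N,j}\omega_j\,ds,\qquad
R_N^{(2)}=-\int_0^t\sum_{j\le N}\sum_{i\ge N}\psi_j\Lambda_{i,j}\omega_i\omega_j\,ds .
\]

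The limit $N\to\infty$ is handled as follows. The left-hand side and the initial-data sum converge to the full series in \eqref{eq:weak}, since $\psi$ is bounded and $M_0(t),M_0^{\mathrm{in}}\le M_1(t),\Min<\infty$. For the flux term we use the bound $|\Phi_\psi(i,j)|\le 2\|\psi\|_\infty j+\|\psi\|_\infty\le 3\|\psi\|_\infty j$. Dominated convergence then reduces everything to the integrability of
\[
D(s):=\sum_{i\ge j}j\,\Lambda_{i,j}\,\omega_i(s)\,\omega_j(s)
\]
on $[0,t]$. The same bound controls both remainders: $|R_N^{(1)}|+|R_N^{(2)}|\le 2\|\psi\|_\infty\int_0^t\sum_{i\ge N}\sum_{j\le i}j\Lambda_{i,j}\omega_i\omega_j\,ds$. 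This is the tail of the convergent series $\int_0^t D$, so it tends to $0$.

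The main obstacle is precisely the global bound $\int_0^t D(s)\,ds<\infty$. Definition~\ref{def:solution} only gives integrability for each fixed $i$, not summed over $i$. The first thing I would try is to obtain it from the truncated identity itself, taking a test sequence with $\Phi\le 0$ and applying monotone convergence. For instance, with $\psi_i=\1_{\{i\le N\}}$ one gets $\Phi(i,j)=-1$ for $j\le i<N$, together with a controlled contribution at $i=N$. This should bound the non-$j$-weighted part $\int_0^t\sum_{i\ge j}\Lambda_{i,j}\omega_i\omega_j$ by $M_0^{\mathrm{in}}$.

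For the $j$-weighted part I would use the mass-type test sequence $\psi_i=i\wedge a$, which the paper later exploits. For $i\ge a$ it gives $\Phi(i,j)=-\psi_j\le0$, while for $j\le i<a$ it gives $\Phi(i,j)=j-j=0$. This yields a bound on the $i\ge a$ part, weighted by $j\wedge a$. If this route does not close, the fallback is to regard $\int_0^t D<\infty$ as part of the solution concept, which the integrability requirement in Definition~\ref{def:solution} is evidently intended to encode. In that case, the convergence argument above completes the proof.
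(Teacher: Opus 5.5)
Your truncated computation is the same algebra as the paper's proof: you shift the index in the gain term, cancel it against the first loss term, and swap $i\leftrightarrow j$ in the fragmentation term using $\Lambda_{i,j}=\Lambda_{j,i}$. The paper only asserts that boundedness of $\psi$ and Definition~\ref{def:solution} justify the interchanges. Your truncation usefully exposes what is really needed.

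\textbf{Where it breaks.} The gap is in the passage $N\to\infty$. You route the flux term and both remainders through $\int_0^t D(s)\,ds<\infty$. This is not merely unavailable from Definition~\ref{def:solution}: it is \emph{false} for every $t>\Tgel$. So neither of your derivation attempts can close, and your fallback of adding it to the solution concept is not admissible.

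To see this, set $J_N:=\omega_N\sum_{j\le N}j\,\Lambda_{N,j}\,\omega_j$, so that $D=\sum_N J_N$. Your truncated identity with $\psi_i=i$ (where $\Phi=0$ for $j\le i\le N-1$) reads
\[
\sum_{i\le N}i\,\omega_i(t)+N\int_0^tJ_N\,ds+\int_0^t\sum_{j\le N\le i}j\,\Lambda_{i,j}\,\omega_i\,\omega_j\,ds=\sum_{i\le N}i\,\win .
\]
Suppose $\int_0^tD\,ds<\infty$. The last term on the left is bounded by the tail $\int_0^t\sum_{i\ge N}J_i\,ds$, so it tends to $0$. Hence $N\int_0^tJ_N\,ds\to\Min-M_1(t)$. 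This limit must vanish: otherwise $\int_0^tJ_N\,ds\ge c/N$ for some $c>0$ and all large $N$, contradicting $\sum_N\int_0^tJ_N\,ds<\infty$. So $M_1(t)=\Min$. Your extra hypothesis therefore builds mass conservation into the definition. Theorem~\ref{thm:main}, which rests on this lemma, would then only assert that no such solution exists.

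\textbf{The missing idea.} Handle the remainders without $D$.
\begin{enumerate}
\item The display gives $M_1(t)\le\Min$, which you need for the left-hand side. You cannot take it from Proposition~\ref{prop:mass}, since that result is proved with this lemma.
\item The display also gives $N\int_0^tJ_N\,ds\le\Min$. Hence $|R_N^{(1)}|\le\|\psi\|_\infty\Min/N\to0$ for every bounded $\psi$.
\item Your unweighted bound $\int_0^t\sum_{i\ge j}\Lambda_{i,j}\omega_i\omega_j\,ds\le M_0^{\mathrm{in}}$ controls $R_N^{(2)}$, which is a tail of that series. It also controls the $-\psi_j$ part of $\Phi_\psi$.
\item What remains is $\sum_i(\psi_{i+1}-\psi_i)\int_0^tJ_i\,ds$. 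Since $i\int_0^tJ_i\,ds\le\Min$, this converges absolutely whenever $\sum_i|\psi_{i+1}-\psi_i|/i<\infty$, in particular for $\psi$ of bounded variation.
\end{enumerate}
This covers every test sequence used in the paper: $i\wedge a$ and finitely supported ones.

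For a general bounded $\psi$ the statement fails as written. For $\psi_i=(-1)^i$ one has $|\Phi_\psi(i,j)|\ge 2j-1\ge j$, so the right-hand side of \eqref{eq:weak} is not absolutely convergent once $t>\Tgel$. The lemma therefore has to be restricted to such $\psi$. It also needs the hypothesis $\Min<\infty$, which you used without comment and which the lemma does not state. The paper's own proof addresses neither point.
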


\begin{proof}
Multiply~\eqref{eq:DOHSE} by $\psi_{i}$, sum over $i\geq 1$, and
denote by $I_{1}$, $I_{2}$, $I_{3}$ the contributions of the three
terms on the right-hand side.  Since $(\psi_{i})$ is bounded and the
integrability condition of Definition~\ref{def:solution} holds, the
interchange of the sum with the time-derivative is justified after
integration in $t$.

For $I_{1}$, the $i=1$ term vanishes because the inner sum
$\sum_{j=1}^{0}$ is empty.  For $i\geq 2$, the substitution $k:=i-1$
(so $i=k+1$, with $k$ ranging from $1$ to $\infty$ and $j$ from $1$
to $k$) followed by the renaming $k\to i$ yields
\begin{equation}\label{eq:I1}
   I_{1}(t)
   =\sum_{k=1}^{\infty}\psi_{k+1}\,\omega_{k}(t)
   \sum_{j=1}^{k}j\,\Lambda_{k,j}\,\omega_{j}(t)
   =\sum_{\substack{i,j\geq 1\\i\geq j}}
   \psi_{i+1}\,j\,\Lambda_{i,j}\,\omega_{i}(t)\,\omega_{j}(t).
\end{equation}
The double sum in $I_{2}$ runs over $i\geq 1$ and $1\leq j\leq i$,
which is the region $\{i\geq j\geq 1\}$, so
\begin{equation}\label{eq:I2}
   I_{2}(t)
   =-\sum_{\substack{i,j\geq 1\\i\geq j}}
   \psi_{i}\,j\,\Lambda_{i,j}\,\omega_{i}(t)\,\omega_{j}(t).
\end{equation}
For $I_{3}$, the sum runs over $\{j\geq i\geq 1\}$.  Using the
symmetry $\Lambda_{i,j}=\Lambda_{j,i}$, we exchange the summation
indices $i\leftrightarrow j$: the region $\{j\geq i\geq 1\}$ maps to
$\{i\geq j\geq 1\}$, and the summand
$\psi_{i}\,\Lambda_{i,j}\,\omega_{i}\,\omega_{j}$ becomes
$\psi_{j}\,\Lambda_{j,i}\,\omega_{j}\,\omega_{i}
=\psi_{j}\,\Lambda_{i,j}\,\omega_{i}\,\omega_{j}$.  Therefore
\begin{equation}\label{eq:I3}
   I_{3}(t)
   =-\sum_{\substack{i,j\geq 1\\j\geq i}}
   \psi_{i}\,\Lambda_{i,j}\,\omega_{i}\,\omega_{j}
   =-\sum_{\substack{i,j\geq 1\\i\geq j}}
   \psi_{j}\,\Lambda_{i,j}\,\omega_{i}(t)\,\omega_{j}(t).
\end{equation}
Adding~\eqref{eq:I1},~\eqref{eq:I2}, and~\eqref{eq:I3} gives
\begin{equation*}
   I_{1}+I_{2}+I_{3}
   =\sum_{\substack{i\geq j\geq 1}}
   \bigl[\psi_{i+1}\,j-\psi_{i}\,j-\psi_{j}\bigr]
   \Lambda_{i,j}\,\omega_{i}\,\omega_{j}
   =\sum_{\substack{i\geq j\geq 1}}
   \Phi_{\psi}(i,j)\,\Lambda_{i,j}\,\omega_{i}(t)\,\omega_{j}(t).
\end{equation*}
Integrating from $0$ to $t$ gives~\eqref{eq:weak}.
\end{proof}

\begin{remark}\label{rem:flux}
The flux coefficient $\Phi_{\psi}(i,j)=(\psi_{i+1}-\psi_{i})\,j-\psi_{j}$
is \emph{asymmetric} in $i$ and $j$, in contrast to the Smoluchowski
flux $\Delta\psi(x,y)=\psi(x+y)-\psi(x)-\psi(y)$, which is symmetric.
The term $(\psi_{i+1}-\psi_{i})\,j$ captures the gain from the $j$
monomers each capable of merging with an $i$-mer to produce an
$(i+1)$-mer, while $-\psi_{j}$ records the loss of one $j$-cluster
through fragmentation.  Despite this structural difference, the sign
properties of $\Phi_{\psi}$ for the test sequence $\psi_{i}=i\wedge a$
are sufficient to carry out the entire gelation argument.
\end{remark}

\section{Mass Bounds and Finite-Time Gelation}
\label{sec:massgel}

We first verify formally that mass should be conserved, and identify
the precise mechanism by which the conservation argument can fail.
Multiplying~\eqref{eq:DOHSE} by $i$ and summing over $i\geq 1$,
the contributions of the three terms are
\[
   S_{1}=\sum_{i\geq j\geq 1}(i+1)\,j\,\Lambda_{i,j}\,\omega_{i}\omega_{j},
   \quad
   S_{2}=\sum_{i\geq j\geq 1}i\,j\,\Lambda_{i,j}\,\omega_{i}\omega_{j},
\]
where $S_{1}$ is obtained by the substitution $k:=i-1$ followed by
the renaming $k\to i$.  Their difference is
\[
   S_{1}-S_{2}=\sum_{i\geq j\geq 1}j\,\Lambda_{i,j}\,\omega_{i}\omega_{j}.
\]
For $S_{3}$, the sum runs over $\{j\geq i\geq 1\}$; the index exchange
$i\leftrightarrow j$ together with symmetry $\Lambda_{i,j}=\Lambda_{j,i}$
gives
\[
   S_{3}=\sum_{j\geq i\geq 1}i\,\Lambda_{i,j}\,\omega_{i}\omega_{j}
   =\sum_{i\geq j\geq 1}j\,\Lambda_{i,j}\,\omega_{i}\omega_{j}.
\]
Hence $\frac{d}{dt}M_{1}=(S_{1}-S_{2})-S_{3}=0$ formally.  The
cancellation relies on the validity of the index exchange, which requires
absolute convergence of the double sum.  When $\Lambda_{i,j}$ grows too
rapidly, this fails and gelation occurs.  The following proposition
shows rigorously that, regardless of the kernel, $M_{1}$ can never
increase.

\begin{prop}\label{prop:mass}
For any solution $(\omega_{i}(t))$ to~\eqref{eq:DOHSE}--\eqref{eq:IC}
with $\Min<\infty$, we have $M_{1}(t)\leq\Min$ for all $t\geq 0$.
\end{prop}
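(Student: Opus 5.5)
Plan: apply Lemma~\ref{lem:weak} with the bounded truncated test sequence $\psi_i = i\wedge a$ for an integer $a\geq 1$. Show that the flux coefficient $\Phi_\psi(i,j)$ is nonpositive for all $i\geq j\geq 1$. Then let $a\to\infty$ by monotone convergence.

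For the sign computation, fix $i\geq j\geq 1$. We have $\psi_{i+1}-\psi_i = \1_{\{i<a\}}$, so there are two cases.
\begin{itemize}
\item If $i<a$, then $j\leq i<a$ gives $\psi_j=j$, hence $\Phi_\psi(i,j)=j-j=0$.
\item If $i\geq a$, the increment vanishes, so $\Phi_\psi(i,j)=-\psi_j=-(j\wedge a)<0$.
\end{itemize}
Hence $\Phi_\psi\leq 0$ on $\{i\geq j\geq 1\}$. Since $\Lambda_{i,j}\omega_i\omega_j\geq 0$, the integrand in \eqref{eq:weak} is a sum of nonpositive terms. It is therefore well defined in $[-\infty,0]$, and Lemma~\ref{lem:weak} guarantees it is finite in integrated form. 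We conclude
\[
\sum_{i\geq1}(i\wedge a)\,\omega_i(t)\leq \sum_{i\geq1}(i\wedge a)\,\win\leq \Min .
\]
In fact this yields the sharper identity
\[
\sum_i (i\wedge a)\omega_i(t)=\sum_i(i\wedge a)\win-\int_0^t\sum_{i\geq a,\ j\leq i}(j\wedge a)\Lambda_{i,j}\omega_i\omega_j\,ds,
\]
which the later gelation argument will presumably use.

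To pass to the limit, note that $i\wedge a\uparrow i$ as $a\to\infty$ and $\omega_i(t)\geq 0$. Monotone convergence then gives $M_1(t)=\lim_{a\to\infty}\sum_i (i\wedge a)\omega_i(t)\leq\Min$ for every $t\geq 0$.

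The main obstacle is the justification of Lemma~\ref{lem:weak} itself for this $\psi$, namely the interchange of the infinite $i$-sum with the time integral. Since the lemma is already stated for arbitrary bounded $\psi$, I will invoke it directly. If extra care is wanted, I would first apply the per-$i$ integrated equations with $\psi$ truncated to $i\leq N$, where only finitely many equations are involved. Each term is then absolutely integrable by Definition~\ref{def:solution}. I would then let $N\to\infty$, using that the nonpositive flux parts converge monotonically, while boundary terms near $i=N$ are controlled by the integrability condition together with the boundedness $\psi_i\leq a$.
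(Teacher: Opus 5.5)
Your proposal is correct and is essentially the paper's proof: you apply Lemma~\ref{lem:weak} with $\psi_i=i\wedge a$, check in two cases that $\Phi_\psi\leq 0$ on $\{i\geq j\geq 1\}$, and pass to the limit $a\to\infty$ by monotone convergence. In your optional truncated fallback with $\psi_i=(i\wedge a)\mathbf{1}_{i\leq N}$ and $N>a$, the boundary term is $-a\,\omega_N\sum_{j\leq N}j\,\Lambda_{N,j}\,\omega_j\leq 0$; it therefore has a favorable sign and needs no control at all, so that route is fully rigorous as it stands.
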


\begin{proof}
Fix $N\in\N$ and apply Lemma~\ref{lem:weak} with the bounded test
sequence $\psi_{i}:=i\wedge N:=\min(i,N)$.  Since
$\psi_{i+1}-\psi_{i}=\1_{i<N}$, the flux coefficient is
$\Phi_{\psi}(i,j)=j\,\1_{i<N}-(j\wedge N)$.
For $i\geq j\geq 1$ there are two cases: if $j\leq i<N$, then
$\Phi_{\psi}(i,j)=j-j=0$; if $i\geq N$, then
$\Phi_{\psi}(i,j)=-(j\wedge N)\leq 0$.
In both cases $\Phi_{\psi}(i,j)\leq 0$, so the integral
in~\eqref{eq:weak} is nonpositive, giving
\[
   \sum_{i=1}^{\infty}(i\wedge N)\,\omega_{i}(t)
   \leq\sum_{i=1}^{\infty}(i\wedge N)\,\win\leq\Min.
\]
Since $i\wedge N\nearrow i$ as $N\to\infty$ for each fixed $i$, the
Monotone Convergence Theorem yields $M_{1}(t)\leq\Min$.
\end{proof}

We now state and prove the main result.  Fix $a_{0}\in\N$ and $r>1$.
For every integer $a\geq a_{0}$, define the \emph{block intensity}
\begin{equation}\label{eq:Ha}
   \Delta(a):=a\,\inf\bigl\{\Lambda_{i,j}:a\leq i,\,j\leq\lfloor ra\rfloor\bigr\}
\end{equation}
and the \emph{block occupation measure}
\begin{equation*}
   \Pi_{t}(a):=\sum_{i=a}^{\lfloor ra\rfloor}\omega_{i}(t).
\end{equation*}
The quantity $\Delta(a)$ captures the effective coagulation strength over
the block $[a,\lfloor ra\rfloor]^{2}$, normalized by $a$; this
normalization arises from the structure of~\eqref{eq:DOHSE} since
$j\wedge a=a$ when $j\geq a$ in the weak formulation.  The function
$\Pi_{t}(a)$ is the discrete analogue of the occupation measure
$f_{t}([a,ra])$ appearing in the continuous theory~\cite{F2025}.

\begin{theorem}\label{thm:main}
Let $(\Lambda_{i,j})_{i,j\geq 1}$ be a symmetric, nonnegative
coagulation kernel.  Assume there exist $a_{0}\in\N$ and $r>1$ such
that $H(a)>0$ for all integers $a\geq a_{0}$, and
\begin{equation}\label{eq:H2}
   \ell:=\sum_{a=a_{0}}^{\infty}[\Delta(a)]^{-1/2}<\infty.
\end{equation}
Let $(\omega_{i}(t))_{t\geq 0}$ be any solution
to~\eqref{eq:DOHSE}--\eqref{eq:IC} satisfying $\Min<\infty$ and
\begin{equation*}
   \mu_{0}:=\sum_{i=a_{0}+1}^{\infty}(i-a_{0})\,\win>0.
\end{equation*}
Then gelation occurs in finite time, and the gelation time satisfies
\begin{equation}\label{eq:Tgel-bound}
   \Tgel\leq\frac{2\ell^{2}\!\left(\dfrac{r}{r-1}\right)^{\!2}\Min}
   {\mu_{0}^{2}}.
\end{equation}
\end{theorem}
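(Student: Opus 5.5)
We will test the weak formulation, Lemma~\ref{lem:weak}, against the truncated sequences $\psi_{i}=i\wedge a$, as in the proof of Proposition~\ref{prop:mass}, keeping the dissipation term rather than discarding it. (We read the hypothesis $H(a)>0$ as $\Delta(a)>0$.)

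\emph{Step 1: scale-by-scale energy estimate.} For $\psi_{i}=i\wedge a$ and $i\geq j$, we computed $\Phi_{\psi}(i,j)=0$ if $i<a$ and $\Phi_{\psi}(i,j)=-(j\wedge a)$ if $i\geq a$. Hence \eqref{eq:weak} gives
\[
\sum_{i}(i\wedge a)\,\omega_{i}(t)+\int_{0}^{t}\sum_{i\geq a,\ j\leq i}(j\wedge a)\,\Lambda_{i,j}\,\omega_{i}\omega_{j}\,ds=\sum_{i}(i\wedge a)\,\win\leq\Min .
\]
All terms are nonnegative, so we may restrict the double sum to $a\leq j\leq i\leq\lfloor ra\rfloor$. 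There $j\wedge a=a$ and $\Lambda_{i,j}\geq\Delta(a)/a$. Since $\sum_{i\geq j\text{ in block}}\omega_{i}\omega_{j}\geq\tfrac12\Pi_{s}(a)^{2}$, we obtain
\[
\int_{0}^{t}\Pi_{s}(a)^{2}\,ds\leq\frac{2\Min}{\Delta(a)}\qquad\text{for all } t\geq 0,\ a\geq a_{0}.
\]

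\emph{Step 2: lower bound on the tail while no mass is lost.} Argue by contradiction: suppose $M_{1}(t)=\Min$ for all $t<T$. Applying the identity above with $a=a_{0}$, and dropping the integral, gives $\sum_{i}(i\wedge a_{0})\omega_{i}(t)\leq\sum_{i}(i\wedge a_{0})\win$. Subtracting this from $M_{1}(t)=\Min$ yields
\[
\sum_{i>a_{0}}(i-a_{0})\,\omega_{i}(t)\geq\mu_{0}\qquad\text{for } t<T .
\]
Next we need a counting inequality, which is the only delicate point because of the floors. For an integer $i>a_{0}$, let $N_{i}:=\#\{a\geq a_{0}: a\leq i\leq\lfloor ra\rfloor\}$. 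Since $i$ is an integer, $i\leq\lfloor ra\rfloor$ iff $a\geq i/r$, so $a$ ranges over the integers in $[\max(a_{0},i/r),\,i]$.
\begin{itemize}
\item If $a_{0}\geq i/r$, then $N_{i}=i-a_{0}+1\geq i-a_{0}$.
\item Otherwise $N_{i}\geq i-i/r\geq\frac{r-1}{r}(i-a_{0})$; here the integer count is at least the length of the real interval $[i/r,i]$, since $i$ itself is an endpoint.
\end{itemize}
Thus $i-a_{0}\leq\frac{r}{r-1}N_{i}$. Summing against $\omega_{i}(t)$ and exchanging the (nonnegative) sums gives
\[
\mu_{0}\leq\frac{r}{r-1}\sum_{a\geq a_{0}}\Pi_{t}(a)\qquad\text{for } t<T .
\]

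\emph{Step 3: Cauchy--Schwarz and conclusion.} Write $\Pi=\Delta^{-1/4}\cdot\Delta^{1/4}\Pi$ and apply Cauchy--Schwarz:
\[
\Bigl(\sum_{a}\Pi_{t}(a)\Bigr)^{2}\leq\ell\sum_{a}\Delta(a)^{1/2}\,\Pi_{t}(a)^{2}.
\]
Integrating over $[0,T)$ and using Step 1 gives
\[
T\mu_{0}^{2}\Bigl(\tfrac{r-1}{r}\Bigr)^{2}\leq\ell\sum_{a}\Delta(a)^{1/2}\,\frac{2\Min}{\Delta(a)}=2\ell^{2}\Min .
\]
Therefore mass conservation cannot persist beyond the time $2\ell^{2}\bigl(\tfrac{r}{r-1}\bigr)^{2}\Min/\mu_{0}^{2}$. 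Combined with Proposition~\ref{prop:mass}, this gives $\Tgel$ bounded as in \eqref{eq:Tgel-bound}.

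The main obstacle is the counting step in Step 2 with integer blocks; the rest is a direct transcription of Fournier's argument, using the sign structure of $\Phi_{\psi}$.
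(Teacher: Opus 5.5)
Your proposal is correct and takes essentially the same route as the paper: the energy estimate $\int_0^t\Delta(a)\,\Pi_s(a)^2\,ds\le 2\Min$ from the test sequence $i\wedge a$, the tail lower bound $\sum_{i>a_0}(i-a_0)\,\omega_i(t)\ge\mu_0$ from monotonicity of $\sum_i(i\wedge a_0)\,\omega_i$, the same counting inequality $N_i\ge\frac{r-1}{r}(i-a_0)$, and Cauchy--Schwarz with weights $\Delta(a)^{\mp 1/4}$. The only difference is cosmetic: you run the contradiction on $[0,T)$ assuming $M_1=\Min$ there, which gives the bound on $\Tgel$ directly, whereas the paper first bounds $\int_0^t\Upsilon_{2,s}^2\,ds$ for all $t$ and then takes $T=\Tgel$.
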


\begin{proof}
We fix an integer $a\geq a_{0}$ and apply Lemma~\ref{lem:weak} with
the test sequence $\psi_{i}:=i\wedge a$.  As in the proof of
Proposition~\ref{prop:mass}, $\psi_{i+1}-\psi_{i}=\1_{i<a}$, so the
flux coefficient reads $\Phi_{\psi}(i,j)=j\,\1_{i<a}-(j\wedge a)$.
For $i\geq j\geq 1$ this vanishes when $j\leq i<a$, and equals
$-(j\wedge a)$ when $i\geq a$.  Consequently the only nonzero
contributions to the sum in~\eqref{eq:weak} come from pairs with
$i\geq a$, and the identity~\eqref{eq:weak} gives
\begin{equation*}
   \sum_{i=1}^{\infty}(i\wedge a)\,\omega_{i}(t)
   -\sum_{i=1}^{\infty}(i\wedge a)\,\win
   =-\int_{0}^{t}\sum_{\substack{i\geq j\geq 1\\i\geq a}}
   (j\wedge a)\,\Lambda_{i,j}\,\omega_{i}(s)\,\omega_{j}(s)\,ds.
\end{equation*}
The integrand on the right is nonnegative, so rearranging yields
\begin{equation}\label{eq:ineq-a}
   \int_{0}^{t}\sum_{\substack{i\geq j\geq 1\\i\geq a}}
   (j\wedge a)\,\Lambda_{i,j}\,\omega_{i}(s)\,\omega_{j}(s)\,ds
   \leq\sum_{i=1}^{\infty}(i\wedge a)\,\win\leq\Min.
\end{equation}
We now extract a lower bound by restricting the domain of summation to
the smaller region $\{a\leq j\leq i\leq\lfloor ra\rfloor\}$, which is
contained in $\{i\geq j\geq 1,\,i\geq a\}$.  On this region, $j\geq a$
implies $j\wedge a=a$, and $i,j\in[a,\lfloor ra\rfloor]$ implies
$a\,\Lambda_{i,j}\geq \Delta(a)$ by definition~\eqref{eq:Ha}, i.e.,
$\Lambda_{i,j}\geq \Delta(a)/a$.  Therefore
\begin{align}
   \int_{0}^{t}\sum_{\substack{i\geq j\geq 1\\i\geq a}}
   (j\wedge a)\,\Lambda_{i,j}\,\omega_{i}\,\omega_{j}\,ds
   &\geq\int_{0}^{t}\sum_{\substack{a\leq j\leq i\\i\leq\lfloor ra\rfloor}}
   a\,\Lambda_{i,j}\,\omega_{i}\,\omega_{j}\,ds
   \geq\int_{0}^{t}\sum_{\substack{a\leq j\leq i\\i\leq\lfloor ra\rfloor}}
   \Delta(a)\,\omega_{i}\,\omega_{j}\,ds.
   \label{eq:lb}
\end{align}
For the triangular sum over the block ${a\leq j\leq i\leq\lfloor ra\rfloor}$, we use the following elementary inequality for any finite collection of nonnegative real numbers $(x_{j})_{j\in S}$,
\begin{equation}\label{eq:sym-bound}
   \sum_{\substack{j,k\in S\\j\geq k}}x_{j}\,x_{k}
   \geq\frac{1}{2}\!\left(\sum_{j\in S}x_{j}\right)^{\!2}.
\end{equation}
Indeed, since $x_{j}^{2}+x_{k}^{2}\geq 2x_{j}x_{k}$ for all
$j,k$, one has
\[
   2\sum_{\substack{j,k\in S\\j\geq k}}x_{j}x_{k}
   =\sum_{j\in S}x_{j}^{2}+2\sum_{\substack{j,k\in S\\j>k}}x_{j}x_{k}
   +\sum_{j\in S}x_{j}^{2}
   \geq\sum_{j\in S}x_{j}^{2}+2\sum_{\substack{j,k\in S\\j>k}}x_{j}x_{k}
   =\left(\sum_{j\in S}x_{j}\right)^{\!2},
\]
and dividing by $2$ gives~\eqref{eq:sym-bound}.  Applying this with
$S=\{a,a+1,\ldots,\lfloor ra\rfloor\}$ and $x_{i}=\omega_{i}(s)$ yield
\[
   \sum_{\substack{a\leq j\leq i\\i\leq\lfloor ra\rfloor}}
   \omega_{i}(s)\,\omega_{j}(s)
   \geq\frac{1}{2}[\Pi_{s}(a)]^{2}.
\]
Inserting into~\eqref{eq:lb} and combining with~\eqref{eq:ineq-a}, we
obtain the \emph{key energy estimate}
\begin{equation}\label{eq:key-energy}
   \int_{0}^{t}\Delta(a)\,[\Pi_{s}(a)]^{2}\,ds\leq 2\Min,
   \qquad\text{for all }t\geq 0\text{ and }a\geq a_{0}.
\end{equation}
This controls the weighted occupation intensity of every geometric
block $[a,\lfloor ra\rfloor]$, uniformly in time.

Having established~\eqref{eq:key-energy} at each scale $a$, we integrate
over all scales.  Multiplying~\eqref{eq:key-energy} by $[H(a)]^{-1/2}$
and summing over $a=a_{0},a_{0}+1,\ldots$ using hypothesis~\eqref{eq:H2}, we obtain
\begin{equation*}
   \int_{0}^{t}\sum_{a=a_{0}}^{\infty}[\Delta(a)]^{1/2}[\Pi_{s}(a)]^{2}\,ds
   \leq 2\ell\,\Min.
\end{equation*}
To connect the left-hand side to the total mass, we decompose
$M_{1}(s)=\Upsilon_{1,s}+\Upsilon_{2,s}$, where
\begin{equation*}
   \Upsilon_{1,s}:=\sum_{i=1}^{\infty}(i\wedge a_{0})\,\omega_{i}(s),
   \qquad
   \Upsilon_{2,s}:=\sum_{i=a_{0}+1}^{\infty}(i-a_{0})\,\omega_{i}(s),
\end{equation*}
and one readily verifies $\Upsilon_{1,s}+\Upsilon_{2,s}=M_{1}(s)$.  The
estimate~\eqref{eq:ineq-a} applied at $a=a_{0}$ gives
$\Upsilon_{1,s}\leq \Upsilon_{1,0}:=\sum_{i=1}^{\infty}(i\wedge a_{0})\,\win$
for all $s\geq 0$.

It remains to relate $\Upsilon_{2,s}$ to $\sum_{a\geq a_{0}}\Pi_{s}(a)$.  For
each $i>a_{0}$, the integer $i$ belongs to the block $[a,\lfloor ra\rfloor]$
if and only if $a\leq i$ and $a\geq i/r$, so that the valid values of
$a$ are precisely those in $[\max(a_{0},\lceil i/r\rceil),\,i]$.  Hence
\begin{align*}
   \sum_{a=a_{0}}^{\infty}\1_{i\in[a,\lfloor ra\rfloor]}
   &=i-\max\!\bigl(a_{0},\lceil i/r\rceil\bigr)+1
   \geq i-\max(a_{0},\,i/r+1)+1
   =i-\max(a_{0}-1,\,i/r) \\
   &=\min\!\bigl(i-a_{0}+1,\,{\textstyle\frac{r-1}{r}}i\bigr)
   \geq\frac{r-1}{r}(i-a_{0}),
\end{align*}
where the last inequality holds in both cases: if the minimum equals
$i-a_{0}+1\geq i-a_{0}$, this is $\geq\frac{r-1}{r}(i-a_{0})$ since
$\frac{r-1}{r}<1$; if the minimum equals $\frac{r-1}{r}i$, this is
$\geq\frac{r-1}{r}(i-a_{0})$ since $a_{0}>0$.  Exchanging the order
of summation, we have
\begin{align*}
   \sum_{a=a_{0}}^{\infty}\Pi_{s}(a)
   &=\sum_{i=a_{0}}^{\infty}\omega_{i}(s)
   \sum_{a=a_{0}}^{\infty}\1_{i\in[a,\lfloor ra\rfloor]}
   \geq\sum_{i=a_{0}+1}^{\infty}\omega_{i}(s)\cdot
   \frac{r-1}{r}(i-a_{0})
   =\frac{r-1}{r}\,\Upsilon_{2,s}.
\end{align*}
Applying the Cauchy-Schwarz inequality with
$u_{a}=[\Delta(a)]^{-1/4}$ and $v_{a}=[\Delta(a)]^{1/4}\Pi_{s}(a)$ gives
\begin{align}
   \frac{r-1}{r}\,\Upsilon_{2,s}
   &\leq\sum_{a=a_{0}}^{\infty}\Pi_{s}(a)
   =\sum_{a=a_{0}}^{\infty}[\Delta(a)]^{-1/4}\cdot[\Delta(a)]^{1/4}\Pi_{s}(a)
   \notag\\
   &\leq\left(\sum_{a=a_{0}}^{\infty}[\Delta(a)]^{-1/2}\right)^{1/2}
   \!\!\left(\sum_{a=a_{0}}^{\infty}[\Delta(a)]^{1/2}[\Pi_{s}(a)]^{2}
   \right)^{1/2}
   =\ell^{1/2}\!\left(\sum_{a=a_{0}}^{\infty}[\Delta(a)]^{1/2}
   [\Pi_{s}(a)]^{2}\right)^{1/2}.
   \label{eq:CS}
\end{align}
Squaring both sides of~\eqref{eq:CS}, we arrive
\begin{equation}\label{eq:Bs-sq}
   \Upsilon_{2,s}^{2}\leq\left(\frac{r}{r-1}\right)^{2}
   \ell\sum_{a=a_{0}}^{\infty}[\Delta(a)]^{1/2}[\Pi_{s}(a)]^{2}.
\end{equation}
Integrating~\eqref{eq:Bs-sq} from $s=0$ to $s=t$ and applying
Fubini's theorem (all terms are non-negative) to interchange the sum and the
time integral yields
\begin{align*}
   \int_{0}^{t}\Upsilon_{2,s}^{2}\,ds
   &\leq\left(\frac{r}{r-1}\right)^{2}\ell
   \sum_{a=a_{0}}^{\infty}[\Delta(a)]^{1/2}\int_{0}^{t}[\Pi_{s}(a)]^{2}\,ds.
\end{align*}
The key energy estimate~\eqref{eq:key-energy} gives
$\Delta(a)\int_{0}^{t}[\Pi_{s}(a)]^{2}\,ds\leq 2\Min$, i.e.,
$[\Delta(a)]^{1/2}\int_{0}^{t}[\Pi_{s}(a)]^{2}\,ds\leq[\Delta(a)]^{-1/2}\cdot 2\Min$.
Summing over $a$ using hypothesis~\eqref{eq:H2} gives
\begin{equation}\label{eq:time-int-final}
   \int_{0}^{t}\Upsilon_{2,s}^{2}\,ds
   \leq 2\left(\frac{r}{r-1}\right)^{2}\ell^{2}\,\Min.
\end{equation}

The right-hand side is independent of $t$, and therefore the time integral of
$\Upsilon_{2,s}^2$ is uniformly bounded by the constant
$2\left(\frac{r}{r-1}\right)^2\ell^2 M_1^{\mathrm{in}}$.
To conclude, suppose for contradiction that $\Tgel=+\infty$, i.e.,
$M_{1}(s)=\Min$ for all $s\geq 0$.  From $M_{1}(s)=\Upsilon_{1,s}+\Upsilon_{2,s}$ and
the bound $\Upsilon_{1,s}\leq \Upsilon_{1,0}$, we have
\[
   \Upsilon_{2,s}=M_{1}(s)-\Upsilon_{1,s}\geq\Min-\Upsilon_{1,0}
   =\sum_{i=a_{0}+1}^{\infty}(i-a_{0})\,\win=\mu_{0}>0.
\]
Inserting $\Upsilon_{2,s}\geq\mu_{0}$ into~\eqref{eq:time-int-final} yields
\[
   T\,\mu_{0}^{2}\leq\int_{0}^{T}\Upsilon_{2,s}^{2}\,ds
   \leq 2\left(\frac{r}{r-1}\right)^{2}\ell^{2}\,\Min,
\]
which must hold for every $T>0$.  Since the right-hand side is a fixed
finite constant, this is a contradiction for sufficiently large $T$.
Therefore $\Tgel<\infty$, and the bound~\eqref{eq:Tgel-bound} follows
by taking $T=\Tgel$ and rearranging.
\end{proof}

We now verify condition~\eqref{eq:H2} for natural families of kernels
and derive explicit gelation-time bounds from Theorem~\ref{thm:main}.
Throughout we write $i\vee j:=\max(i,j)$ and $i\wedge j:=\min(i,j)$.

\begin{example}\label{ex:kernels}
We list several kernel classes for which hypothesis~\eqref{eq:H2} holds,
together with the corresponding bounds on $\Delta(a)$ and $\ell$.
\end{example}

\medskip
\begin{enumerate}
\item \textbf{Kernels of the form $\Lambda_{i,j}\geq
C_{0}(i\wedge j)^{\gamma}$, $\gamma>1$.}
For $i,j\in[a,\lfloor ra\rfloor]$ one has $i\wedge j\geq a$, so
$\Lambda_{i,j}\geq C_{0}a^{\gamma}$ and by definition~\eqref{eq:Ha},
$\Delta(a)\geq C_{0}a^{1+\gamma}$, giving
$[\Delta(a)]^{-1/2}\leq C_{0}^{-1/2}a^{-(1+\gamma)/2}$.  Since $\gamma>1$
implies $(1+\gamma)/2>1$, comparison with the integral
$\int_{a_{0}}^{\infty}a^{-(1+\gamma)/2}\,da
=\dfrac{2\,a_{0}^{(1-\gamma)/2}}{\gamma-1}$ yields
\[
   \ell\leq\frac{2\,C_{0}^{-1/2}a_{0}^{(1-\gamma)/2}}{\gamma-1}<\infty,
   \qquad
   \Tgel\leq\frac{8\,C_{0}^{-1}a_{0}^{1-\gamma}}{(\gamma-1)^{2}}
   \cdot\left(\frac{r}{r-1}\right)^{2}\cdot\frac{\Min}{\mu_{0}^{2}}.
\]
\medskip
\item \textbf{Power-law kernels with ratio factor.}
Let $\theta\geq 0$ and $\gamma>1$.  For
\[
   \Lambda_{i,j}=\rho(i\wedge j)^{\gamma}
   \left(\frac{i\wedge j}{i\vee j}\right)^{\!\theta},
\]
one has for $i,j\in[a,\lfloor ra\rfloor]$ with any $r>1$ that
$(ij)^{\gamma/2}\geq a^{\gamma}$ and
$(i\wedge j)^{\gamma}(i\wedge j/i\vee j)^{\theta}\geq r^{-\theta}a^{\gamma}$
respectively, giving $\Delta(a)\geq\rho r^{-\theta}a^{1+\gamma}$  and
\[
   \ell\leq\frac{2(\rho r^{-\theta})^{-1/2}a_{0}^{(1-\gamma)/2}}{\gamma-1}
   <\infty,
   \qquad
   \Tgel\leq\frac{8(\rho r^{-\theta})^{-1}a_{0}^{1-\gamma}}{(\gamma-1)^{2}}
   \cdot\left(\frac{r}{r-1}\right)^{2}\cdot\frac{\Min}{\mu_{0}^{2}}.
\]

\medskip
\item \textbf{Truncated-ratio kernels.}
Fix $\varepsilon>0$ and $\theta\geq 0$.  For
\[
   \Lambda_{i,j}
   =(i\wedge j)
   \left(\frac{i\wedge j}{i\vee j}-\frac{1}{2}\right)_{\!+}^{\theta}
   \log^{2+\varepsilon}(e+i\wedge j),
\]
with $r\in(1,2)$. Since $i\vee j\leq ra<2a\leq 2(i\wedge j)$, one has
$(i\wedge j/i\vee j-1/2)_{+}\geq 1/r-1/2>0$, giving
\[
   \Delta(a)\geq\left(\frac{1}{r}-\frac{1}{2}\right)^{\!\theta}
   a^{2}\log^{2+\varepsilon}(e+a),
   \qquad
   [\Delta(a)]^{-1/2}\leq\frac{(1/r-1/2)^{-\theta/2}}
   {a\log^{1+\varepsilon/2}(e+a)}.
\]
Since $1+\varepsilon/2>1$, the series converges by the integral test, and comparison with
$\int_{a_{0}}^{\infty}[x\log^{1+\varepsilon/2}(e+x)]^{-1}dx$ yields
\[
   \ell\leq\frac{4\,(1/r-1/2)^{-\theta/2}}
   {\varepsilon\,\log^{\varepsilon/2}(e+a_{0})}<\infty,
   \quad
   \Tgel\leq\frac{32}{\varepsilon^{2}(1/r-1/2)^{\theta}
   \log^{\varepsilon}(e+a_{0})}
   \left(\frac{r}{r-1}\right)^{2}\frac{\Min}{\mu_{0}^{2}},
\]
with $r$ restricted to $(1,2)$ exactly as in~\cite{F2025}.

\item \textbf{One-homogeneous kernels with logarithmic correction.}
Fix $\varepsilon>0$ and $\theta\geq 0$.  For
\[
   \Lambda_{i,j}
   =(i\wedge j)\left(\frac{i\wedge j}{i\vee j}\right)^{\!\theta}
   \log^{2+\varepsilon}(e+i\wedge j),
\]
and $i,j\in[a,\lfloor ra\rfloor]$ with any $r>1$, one has
$i\wedge j\geq a$ and $(i\wedge j)/(i\vee j)\geq 1/r$, so
\[
   \Delta(a)\geq r^{-\theta}a^{2}\log^{2+\varepsilon}(e+a),
   \qquad
   [\Delta(a)]^{-1/2}\leq\frac{r^{\theta/2}}{a\log^{1+\varepsilon/2}(e+a)}.
\]
Since $1+\varepsilon/2>1$, the series converges by the integral test, and comparison with
$\int_{a_{0}}^{\infty}[x\log^{1+\varepsilon/2}(e+x)]^{-1}dx$ yields
\[
   \ell \leq\frac{4\,r^{\theta/2}}{\varepsilon\,\log^{\varepsilon/2}(e+a_{0})}<\infty,
   \qquad
   \Tgel\leq\frac{32\,r^{\theta}}{\varepsilon^{2}\log^{\varepsilon}(e+a_{0})}
   \left(\frac{r}{r-1}\right)^{2}\frac{\Min}{\mu_{0}^{2}}.
\]
Unlike the truncated-ratio kernel, any $r>1$ is admissible here, since
$\Lambda_{i,j}$ does not vanish for well-separated clusters.

\item \textbf{Mixed polynomial-logarithmic kernels.}
Let $\gamma>1$, $\theta\geq 0$, and $\varepsilon\geq 0$.  For
\begin{equation}\label{eq:mixed-kernel}
   \Lambda_{i,j}
   \geq C_{0}(i\wedge j)^{\gamma}
   \left(\frac{i\wedge j}{i\vee j}\right)^{\!\theta}
   \log^{\varepsilon}(e+i\wedge j),
\end{equation}
one has $\Delta(a)\geq C_{0}r^{-\theta}a^{1+\gamma}\log^{\varepsilon}(e+a)$
and $[\Delta(a)]^{-1/2}\leq(C_{0}r^{-\theta})^{-1/2}
a^{-(1+\gamma)/2}\log^{-\varepsilon/2}(e+a)$.
Since $\gamma>1$ gives $(1+\gamma)/2>1$, this is summable for every
$\varepsilon\geq 0$ (the logarithmic factor only accelerates
convergence), and comparison with
$\int_{a_{0}}^{\infty}a^{-(1+\gamma)/2}\,da$ yields
\[
   \ell \leq\frac{2\,(C_{0}r^{-\theta})^{-1/2}a_{0}^{(1-\gamma)/2}}
   {\gamma-1},
   \qquad
   \Tgel\leq\frac{8\,C_{0}^{-1}r^{\theta}a_{0}^{1-\gamma}}{(\gamma-1)^{2}}
   \left(\frac{r}{r-1}\right)^{2}\frac{\Min}{\mu_{0}^{2}}.
\]
In the boundary case $\gamma=1$ the conclusion holds provided
$\varepsilon>2$, since then $[H(a)]^{-1/2}\leq(C_{0}r^{-\theta})^{-1/2}
[a\log^{\varepsilon/2}(e+a)]^{-1}$ is summable iff $\varepsilon/2>1$,
recovering case~\textbf{(4)} above.  The kernels
in~\eqref{eq:mixed-kernel} with $\theta>0$ are not covered by any
earlier work on the discrete OHS system, yet Theorem~\ref{thm:main}
shows that gelation is robust to the suppression of coagulation
between clusters of very different sizes, provided only that $\gamma>1$.
\end{enumerate}
By Theorem~\ref{thm:main}, every solution with $\Min<\infty$ and
$\mu_{0}>0$ gels in finite time for all kernels in
cases~\textbf{(1)}--\textbf{(5)}.

We next treat the factorized class and symmetric polynomial kernels, which allow us to write $\kappa$ and $\Tgel$ in fully explicit form.

\begin{prop}\label{prop:factorized}
Let $\theta:\N\to(0,\infty)$ be a non-decreasing sequence and set
$\Lambda_{i,j}:=\theta_{i}\theta_{j}$.  Then $H(a)\geq a\theta_{a}^{2}$
for any $a_{0}\in\N$ and $r>1$, and hypothesis~\eqref{eq:H2} holds if
and only if
\begin{equation}\label{eq:factorized-cond}
   \ell=\sum_{a=a_{0}}^{\infty}\frac{1}{\sqrt{a}\;\theta_{a}}<\infty.
\end{equation}
The same conclusion holds for the class
$\Lambda_{i,j}=\theta_{i}\theta_{j}+\kappa_{i,j}$ with
$0\leq\kappa_{i,j}\leq A\theta_{i}\theta_{j}$ and $A\geq 0$.  In
particular, for $\theta_{i}=C^{1/2}i^{\gamma/2}$ with $C>0$ and
$\gamma>1$, condition~\eqref{eq:factorized-cond} gives
\begin{align}\label{eq:kappa-factorized}
   \ell\leq\frac{2C^{-1/2}a_{0}^{(1-\gamma)/2}}{\gamma-1}<\infty,
   \qquad
   \Tgel\leq\frac{8C^{-1}a_{0}^{1-\gamma}}{(\gamma-1)^{2}}
   \cdot\left(\frac{r}{r-1}\right)^{2}\cdot\frac{\Min}{\mu_{0}^{2}}.
\end{align}
\end{prop}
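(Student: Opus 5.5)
The plan is to compute $H(a)=\Delta(a)$ exactly from definition~\eqref{eq:Ha}, using monotonicity of $\theta$, and then read off everything else from Theorem~\ref{thm:main}. Fix $a_{0}\in\N$, $r>1$ and an integer $a\geq a_{0}$. For $i,j\in[a,\lfloor ra\rfloor]$, monotonicity gives $\theta_{i}\geq\theta_{a}$ and $\theta_{j}\geq\theta_{a}$, hence $\Lambda_{i,j}=\theta_{i}\theta_{j}\geq\theta_{a}^{2}$. The pair $i=j=a$ lies in the block and attains this value, so the infimum equals $\theta_{a}^{2}$ and $\Delta(a)=a\theta_{a}^{2}$ with equality. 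Since $\theta_{a}>0$, positivity of $\Delta(a)$ for all $a\geq a_{0}$ is automatic. Moreover $[\Delta(a)]^{-1/2}=1/(\sqrt{a}\,\theta_{a})$ termwise, so~\eqref{eq:H2} is literally the series~\eqref{eq:factorized-cond}. The equality, not merely the lower bound, is what gives the ``if and only if''.

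For the perturbed class $\Lambda_{i,j}=\theta_{i}\theta_{j}+\kappa_{i,j}$ with $0\leq\kappa_{i,j}\leq A\theta_{i}\theta_{j}$, I would sandwich the kernel as
\[
   \theta_{i}\theta_{j}\leq\Lambda_{i,j}\leq(1+A)\theta_{i}\theta_{j}.
\]
Taking infima over the block and using the computation above gives $a\theta_{a}^{2}\leq\Delta(a)\leq(1+A)a\theta_{a}^{2}$. Consequently $[\Delta(a)]^{-1/2}$ is comparable to $1/(\sqrt{a}\,\theta_{a})$, with constants $1$ and $(1+A)^{-1/2}$. The two series therefore converge or diverge together, and in the convergent case $\ell$ is still bounded by the sum in~\eqref{eq:factorized-cond}.

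For the power case $\theta_{i}=C^{1/2}i^{\gamma/2}$, the summand becomes $C^{-1/2}a^{-p}$ with $p=(1+\gamma)/2>1$. I would then bound the sum by an integral exactly as in Example~\ref{ex:kernels}(1), which gives $\ell\lesssim 2C^{-1/2}a_{0}^{(1-\gamma)/2}/(\gamma-1)$. Substituting into~\eqref{eq:Tgel-bound} yields the stated bound on $\Tgel$, since $\ell^{2}$ produces the factor $4C^{-1}a_{0}^{1-\gamma}/(\gamma-1)^{2}$ and~\eqref{eq:Tgel-bound} contributes the extra factor $2$.

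The only step I expect to be delicate is this integral comparison. For a decreasing summand, $\sum_{a\geq a_{0}}a^{-p}$ is bounded by $a_{0}^{-p}+\int_{a_{0}}^{\infty}x^{-p}\,dx$, or alternatively (by convexity) by $\int_{a_{0}-1/2}^{\infty}x^{-p}\,dx$. It is not bounded by $\int_{a_{0}}^{\infty}x^{-p}\,dx$ alone, because each term $a^{-p}$ exceeds $\int_{a}^{a+1}x^{-p}\,dx$. To make the displayed constant rigorous I would therefore either replace $a_{0}$ by $a_{0}-\tfrac12$ (or $a_{0}-1$ when $a_{0}\geq2$) in the bound, or add the first term $a_{0}^{-p}$. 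I would present the estimate in this corrected form and remark that it agrees with~\eqref{eq:kappa-factorized} up to this harmless shift, which leaves finiteness of $\ell$ and of $\Tgel$ unaffected.
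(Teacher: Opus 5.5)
Your proposal is correct and follows the same route as the paper: monotonicity of $\theta$ bounds the block infimum below by $\theta_a^2$, and everything else is read off from Theorem~\ref{thm:main} together with an integral comparison. It is, however, more complete than the paper's own argument in two places.

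\textbf{The equivalence.} The paper only proves $\Delta(a)\geq a\theta_a^2$, which gives sufficiency of \eqref{eq:factorized-cond} but not the claimed ``if and only if''. Your observation that the diagonal pair $i=j=a$ lies in the block gives $\Delta(a)=a\theta_a^2$ exactly. In the perturbed class it gives $a\theta_a^2\leq\Delta(a)\leq(1+A)\,a\theta_a^2$, which uses the hypothesis $\kappa_{i,j}\leq A\theta_i\theta_j$ that the paper never invokes. This is precisely what supplies the ``only if'' direction in both cases.

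\textbf{The integral comparison.} Your concern is justified. For a decreasing summand one has $\sum_{a\geq a_0}a^{-p}>\int_{a_0}^{\infty}x^{-p}\,dx$ strictly. Since $\Delta(a)=Ca^{1+\gamma}$ exactly in the pure power case, the displayed bound on $\ell$ in \eqref{eq:kappa-factorized} is actually false for every choice of $C,\gamma,a_0$. For example, $C=1$, $\gamma=3$, $a_0=1$ gives $\ell=\pi^2/6>1$. What this route does prove is your shifted version, for instance with $a_0$ replaced by $a_0-\tfrac12$ via convexity. The bound on $\Tgel$ is shifted correspondingly, and finiteness of both quantities is unaffected.
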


\begin{proof}
Since $\theta$ is non-decreasing, $\theta_{i}\geq\theta_{a}$ and
$\theta_{j}\geq\theta_{a}$ for $i,j\in[a,\lfloor ra\rfloor]$, giving
$\Lambda_{i,j}\geq\theta_{a}^{2}$ and hence
$\Delta(a)\geq a\theta_{a}^{2}$.  Thus
$[\Delta(a)]^{-1/2}\leq(\sqrt{a}\;\theta_{a})^{-1}$, and summing over
$a\geq a_{0}$ gives~\eqref{eq:factorized-cond}.  For the perturbed
class, $\Lambda_{i,j}\geq\theta_{i}\theta_{j}$ so $\Delta(a)$ is unchanged.
The power-law case $\theta_{i}=C^{1/2}i^{\gamma/2}$ gives
$\Delta(a)\geq Ca^{1+\gamma}$ and the bound~\eqref{eq:kappa-factorized}
follows from~\eqref{eq:Tgel-bound}.
\end{proof}

The factorized class in Proposition~\ref{prop:factorized} extends
well beyond power laws.  For instance, $\theta_{i}=i(\log(e+i))^{s}$
with $s\ge0$ satisfies~\eqref{eq:factorized-cond} since
$\sum_{a\geq a_{0}}(a^{3/2}(\log(e+a))^{s})^{-1}<\infty$ for any
$s\ge0$.

\begin{prop}\label{prop:polynomial}
Let $\alpha,\beta>0$ with $\alpha\geq\beta$ and let
$\Lambda_{i,j}=i^{\alpha}j^{\beta}+i^{\beta}j^{\alpha}$.  Then
$\Delta(a)\geq 2a^{1+\alpha+\beta}$ for any $a_{0}\in\N$ and $r>1$, and
hypothesis~\eqref{eq:H2} holds if and only if $\alpha+\beta>1$, in
which case
\begin{align}\label{eq:kappa-polynomial}
   \ell\leq\frac{2^{1/2}a_{0}^{(1-\alpha-\beta)/2}}{\alpha+\beta-1}
   <\infty,
   \qquad
   \Tgel\leq\frac{4\left(\dfrac{r}{r-1}\right)^{2}
   a_{0}^{1-\alpha-\beta}\,\Min}{(\alpha+\beta-1)^{2}\,\mu_{0}^{2}}.
\end{align}
\end{prop}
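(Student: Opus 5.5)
Fix $a_{0}\in\N$ and $r>1$. Take an integer $a\geq a_{0}$ and indices $i,j\in[a,\lfloor ra\rfloor]$. Since $\alpha,\beta>0$, the maps $x\mapsto x^{\alpha}$ and $x\mapsto x^{\beta}$ are increasing, so $i^{\alpha}j^{\beta}\geq a^{\alpha+\beta}$ and likewise $i^{\beta}j^{\alpha}\geq a^{\alpha+\beta}$. Hence $\Lambda_{i,j}\geq 2a^{\alpha+\beta}$ on the whole block. Taking the infimum and multiplying by $a$ in~\eqref{eq:Ha} gives $\Delta(a)\geq 2a^{1+\alpha+\beta}$, which also shows the positivity requirement of Theorem~\ref{thm:main}.

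For the sufficiency of $\alpha+\beta>1$, the lower bound gives $[\Delta(a)]^{-1/2}\leq 2^{-1/2}a^{-(1+\alpha+\beta)/2}$. The exponent $p:=(1+\alpha+\beta)/2$ exceeds $1$ exactly when $\alpha+\beta>1$. In that case the sum is controlled by an integral comparison as in Example~\ref{ex:kernels}(1): bound $\sum_{a\geq a_{0}}a^{-p}$ by $a_{0}^{-p}+\int_{a_{0}}^{\infty}x^{-p}\,dx$, with $\int_{a_{0}}^{\infty}x^{-p}\,dx=\frac{2a_{0}^{(1-\alpha-\beta)/2}}{\alpha+\beta-1}$. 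This yields a bound on $\ell$ of the form $2^{1/2}a_{0}^{(1-\alpha-\beta)/2}/(\alpha+\beta-1)$, up to the first-term correction. Squaring $\ell$ and inserting it into~\eqref{eq:Tgel-bound} gives $2\ell^{2}\le 4a_{0}^{1-\alpha-\beta}/(\alpha+\beta-1)^{2}$, which is~\eqref{eq:kappa-polynomial}.

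For necessity, I would use the matching upper bound. For $i,j\leq\lfloor ra\rfloor\leq ra$, the kernel satisfies $\Lambda_{i,j}\leq 2(ra)^{\alpha+\beta}$; in fact the infimum is attained at $i=j=a$, so $\Delta(a)=2a^{1+\alpha+\beta}$ exactly. Then $[\Delta(a)]^{-1/2}=2^{-1/2}a^{-(1+\alpha+\beta)/2}$, and this is not summable when $\alpha+\beta\leq1$ by the $p$-series test. So~\eqref{eq:H2} is equivalent to $\alpha+\beta>1$.

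The main obstacle is the explicit constant. The integral comparison naturally gives an extra first term $a_{0}^{-p}$, or else the integral from $a_{0}-1$. To obtain exactly the stated constant, I would first try the sharper comparison $a^{-p}\leq\int_{a-1/2}^{a+1/2}x^{-p}\,dx$, which holds by convexity of $x^{-p}$. This gives $\sum_{a\geq a_{0}}a^{-p}\leq\frac{(a_{0}-1/2)^{1-p}}{p-1}$. If that still does not reach the stated form, I would accept a constant differing by a harmless factor depending only on $a_{0}$, mirroring the bounds stated in Example~\ref{ex:kernels}.
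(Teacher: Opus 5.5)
Your computation of the block intensity is correct, and in one respect more complete than the paper's. Because $\Lambda_{i,j}$ is increasing in each index, the infimum over $[a,\lfloor ra\rfloor]^{2}$ is attained at $(a,a)$, so $\Delta(a)=2a^{1+\alpha+\beta}$ exactly. This gives both directions of the equivalence with $\alpha+\beta>1$; the paper records only the lower bound on $\Delta(a)$, which by itself yields only sufficiency.

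The genuine problem is the explicit constant, and it cannot be overcome. Your sentence ``Squaring $\ell$ and inserting it \ldots gives $2\ell^{2}\le 4a_{0}^{1-\alpha-\beta}/(\alpha+\beta-1)^{2}$'' does not follow from your own comparison, which carries the extra term $a_{0}^{-p}$. It is in fact false. With $\Delta(a)$ known exactly, $\ell=2^{-1/2}\sum_{a\ge a_{0}}a^{-p}$. Since $x\mapsto x^{-p}$ is strictly decreasing, $a^{-p}>\int_{a}^{a+1}x^{-p}\,dx$ for every $a$, whence
\[
   \ell>2^{-1/2}\int_{a_{0}}^{\infty}x^{-p}\,dx
   =\frac{2^{1/2}a_{0}^{(1-\alpha-\beta)/2}}{\alpha+\beta-1}
\]
for every admissible $\alpha,\beta,a_{0}$. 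For example, $\alpha=\beta=1$, $a_{0}=1$ gives $\ell=2^{-1/2}\zeta(3/2)\approx 1.85>\sqrt{2}$. For a decreasing summand, the integral from $a_{0}$ bounds the tail sum from \emph{below}, so no sharper comparison, midpoint or otherwise, can reach the stated form. The paper's one-line justification (``Comparing with the integral gives the bound on $\ell$'') makes exactly this wrong-direction comparison. So does item (1) of Example~\ref{ex:kernels}, which you propose to mirror. The defect lies in the statement, not in your technique.

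What can actually be proved is your fallback. By convexity of $x^{-p}$, $a^{-p}\le\int_{a-1/2}^{a+1/2}x^{-p}\,dx$, so
\[
   \ell\le\frac{2^{1/2}(a_{0}-\tfrac12)^{(1-\alpha-\beta)/2}}{\alpha+\beta-1},
   \qquad
   \Tgel\le\frac{4\left(\frac{r}{r-1}\right)^{2}(a_{0}-\tfrac12)^{1-\alpha-\beta}\,\Min}{(\alpha+\beta-1)^{2}\,\mu_{0}^{2}}.
\]
This is \eqref{eq:kappa-polynomial} up to the factor $\bigl(a_{0}/(a_{0}-\tfrac12)\bigr)^{\alpha+\beta-1}\le 2^{\alpha+\beta-1}$. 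Note that this factor depends on $\alpha+\beta$, not only on $a_{0}$.

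You should state this corrected bound as the conclusion rather than claiming \eqref{eq:kappa-polynomial}. The stated bound on $\ell$ is false. The stated bound on $\Tgel$ is not refuted, but it no longer follows from \eqref{eq:Tgel-bound}. Your other fallback, integrating from $a_{0}-1$, is unusable when $a_{0}=1$, where that integral diverges.
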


\begin{proof}
For $i,j\in[a,\lfloor ra\rfloor]$, one has $i\geq a$ and $j\geq a$, so
$i^{\alpha}j^{\beta}+i^{\beta}j^{\alpha}\geq 2a^{\alpha+\beta}$ and
$\Delta(a)\geq 2a^{1+\alpha+\beta}$.  Hence
$[\Delta(a)]^{-1/2}\leq 2^{-1/2}a^{-(1+\alpha+\beta)/2}$, which is
summable iff $\alpha+\beta>1$.  Comparing with the integral gives
the bound on $\ell$ in~\eqref{eq:kappa-polynomial}, and substituting
into~\eqref{eq:Tgel-bound} yields the bound on $\Tgel$.
\end{proof}

The threshold $\alpha+\beta>1$ in Proposition~\ref{prop:polynomial} is
sharp within our framework, since the series $\sum a^{-1}$ diverges at
$\alpha+\beta=1$.  As a canonical illustration, for the multiplicative
kernel $\Lambda_{i,j}=ij$ (i.e., $\alpha=\beta=1$), choosing $a_{0}=1$
and $r=2$ gives $\ell\leq\sqrt{2}$ and $(r/(r-1))^{2}=4$, so
\begin{align}\label{eq:Tgel-mult}
   \Tgel\leq 16\cdot\frac{\Min}{\mu_{0}^{2}}.
\end{align}
We record a monotonicity property of the gelation time with respect
to the initial data.  Let $(\omega_{i}^{(k),\mathrm{in}})_{i\geq 1}$,
$k=1,2$, be two initial data with respective total masses
$M_{1}^{(k),\mathrm{in}}$ and tail moments
$\mu_{0}^{(k)}:=\sum_{i>a_{0}}(i-a_{0})\omega_{i}^{(k),\mathrm{in}}>0$.
Since the constants $\kappa$ and $r$ in bound~\eqref{eq:Tgel-bound}
depend only on the kernel, two natural comparison scenarios arise.

\medskip
\noindent\textit{Same tail moment, different total mass.}
If $\mu_{0}^{(1)}=\mu_{0}^{(2)}$ and
$M_{1}^{(1),\mathrm{in}}\leq M_{1}^{(2),\mathrm{in}}$, then
$\Tgel^{(1)}\leq\Tgel^{(2)}$: for the same distribution of large
clusters, more total mass forces earlier gelation.

\medskip
\noindent\textit{Same total mass, different tail moment.}
If $M_{1}^{(1),\mathrm{in}}=M_{1}^{(2),\mathrm{in}}$ and
$\mu_{0}^{(1)}\geq\mu_{0}^{(2)}$, then
$\Tgel^{(1)}\leq\Tgel^{(2)}$: for the same total mass, initial data
more concentrated at large cluster sizes gels sooner.

\medskip
More generally, whenever
\[
   \frac{M_{1}^{(1),\mathrm{in}}}{\bigl(\mu_{0}^{(1)}\bigr)^{2}}
   \leq
   \frac{M_{1}^{(2),\mathrm{in}}}{\bigl(\mu_{0}^{(2)}\bigr)^{2}},
\]
bound~\eqref{eq:Tgel-bound} gives immediately that
$\Tgel^{(1)}\leq\Tgel^{(2)}$.  The ratio $\Min/\mu_{0}^{2}$ thus
serves as a natural measure of how concentrated the initial mass is at
large cluster sizes.  To see this concretely, consider monodisperse
initial data $\omega_{i}^{\mathrm{in}}=m\delta_{in}$ at cluster size
$n\geq a_{0}+1$ with total density $m>0$.  Then $\Min=mn$ and
$\mu_{0}=m(n-a_{0})$, giving
\[
   \frac{\Min}{\mu_{0}^{2}}
   =\frac{n}{m(n-a_{0})^{2}}\to 0
   \quad\text{as }m\to\infty\text{ or }n\to\infty,
\]
confirming that systems initially loaded with many clusters, or with
clusters at large sizes, gel in shorter time.

The gelation problem for the discrete OHS system~\eqref{eq:DOHSE} was studied previously in~\cite{AG 2023} and~\cite{DAS 2022}, and we now explain precisely how Theorem~\ref{thm:main} relates to both works. In~\cite{AG 2023}, gelation is established for kernels of the form
\begin{align}\label{eq:ali-giri}
   \Lambda_{i,j}\geq C(ij)^{\lambda/2},
   \qquad\lambda\in(1,2),\quad C>0,
\end{align}
via moment estimate. The kernel~\eqref{eq:ali-giri} is a special case of Proposition~\ref{prop:factorized} with $\theta_{i}=C^{1/2}i^{\lambda/2}$. Consequently, Theorem~\ref{thm:main} recovers the gelation result of~\cite{AG 2023}, with the additional benefit of providing
the explicit bound
\[
   \Tgel\leq\frac{8C^{-1}(\lambda-1)^{-2}a_{0}^{1-\lambda}
   (r/(r-1))^{2}\Min}{\mu_{0}^{2}},
\]
which was not established in~\cite{AG 2023}.  Moreover, Theorem~\ref{thm:main}
applies to all $\lambda>1$ (not only $\lambda\in(1,2)$).

In~\cite{DAS 2022}, instantaneous gelation ($\Tgel=0$) is proved under
the two-sided bound
\begin{align}\label{eq:das-saha}
   \mathcal{B}_{L}(i^{\alpha}j^{\beta}+i^{\beta}j^{\alpha})
   \leq\Lambda_{i,j}
   \leq \mathcal{B}_{U}(1+i)^{\omega}(1+j)^{\omega},
   \qquad\alpha,\beta\geq 1,\quad\omega>1.
\end{align}
The lower bound in~\eqref{eq:das-saha} implies, for
$i,j\in[a,\lfloor ra\rfloor]$, that $\Lambda_{i,j}\geq 2\mathcal{B}_{L}a^{\alpha+\beta}$
and hence $\Delta(a)\geq 2\mathcal{B}_{L}a^{1+\alpha+\beta}$.  Since
$\alpha,\beta\geq 1$ gives $\alpha+\beta\geq 2>1$,
Proposition~\ref{prop:polynomial} applies and Theorem~\ref{thm:main}
yields $\Tgel<\infty$.  The result of~\cite{DAS 2022} is of course
sharper in this regime ($\Tgel=0$), but Theorem~\ref{thm:main}
provides the complementary quantitative upper bound
\[
   \Tgel\leq\frac{4(r/(r-1))^{2}(2\mathcal{B}_{L})^{-1}
   a_{0}^{1-\alpha-\beta}\Min}{(\alpha+\beta-1)^{2}\mu_{0}^{2}},
\]
and extends the gelation conclusion to all $\alpha,\beta>0$ with
$\alpha+\beta>1$, a range not accessible by the methods of~\cite{DAS 2022}.

To summarize, the complete picture for the kernel
$\Lambda_{i,j}=i^{\alpha}j^{\beta}+i^{\beta}j^{\alpha}$ is as follows.

\begin{itemize}
\item When $\alpha+\beta\leq 1$, our method gives no information;
moreover, gelation is not expected to occur for any initial data,
since the kernel grows too slowly to drive infinite-mass cluster
formation in finite time.

\item When $\alpha+\beta\in(1,2)$, which includes the case
      $\alpha=\beta=\lambda/2$ with $\lambda\in(1,2)$ of~\cite{AG 2023},
      Theorem~\ref{thm:main} gives finite-time gelation with the explicit
      bound~\eqref{eq:kappa-polynomial}.
\item When $\alpha+\beta\geq 2$ with $\alpha,\beta\geq 1$,
the stronger conclusion $\Tgel=0$ was proved in~\cite{DAS 2022},
whereas our result provides a complementary finite upper bound on
$\Tgel$ in terms of the initial data.
\end{itemize}

\section{Non-gelation and the Critical Case}
\label{sec:criticality}

The summability condition~\eqref{eq:H2} is sufficient
but not necessary for gelation.  This can be seen by comparing the
gelation and non-gelation criteria in the critical logarithmic case.
On the one hand, condition~\eqref{eq:H2} applied to a kernel
$\Lambda_{i,j}=(i+j)\log^{\alpha}(e+i\wedge j)$ gives
$\Delta(a)\geq c\,a^{2}\log^{\alpha}(e+a)$ for some $c>0$, so that
$[\Delta(a)]^{-1/2}\leq c^{-1/2}[a\log^{\alpha/2}(e+a)]^{-1}$ is summable if
and only if $\alpha>2$; thus Theorem~\ref{thm:main} yields gelation only
for $\alpha>2$.  On the other hand, Proposition~\ref{prop:log-gel-nocutoff}
establishes gelation for the same kernel for \emph{all} $\alpha>1$, a
strictly larger range.  Hence gelation occurs in the entire regime
$\alpha\in(1,2]$ where~\eqref{eq:H2} fails, which shows that the
summability of $[\Delta(a)]^{-1/2}$ is not necessary for mass loss.  The
gap reflects the fact that Theorem~\ref{thm:main} controls only the
diagonal blocks $[a,\lfloor ra\rfloor]^{2}$ through the local intensity
$\Delta(a)$, whereas the sharper argument of
Proposition~\ref{prop:log-gel-nocutoff} exploits the cumulative
contribution of all pairs $(i,j)$ with $i\geq j$ via a global test
sequence.

\begin{prop}\label{prop:log-gel-nocutoff}
Let $\alpha>1$ and suppose $\Lambda_{i,j}\geq C(i+j)\log^{\alpha}(e+i\wedge j)$
for all $i,j\geq 1$ and some $C>0$.  Then every solution
to~\eqref{eq:DOHSE}--\eqref{eq:IC} with $\Min\in(0,\infty)$ gels in
finite time, with $\Tgel<\infty \quad\text{and}\quad \Tgel\leq \frac{C_{*}}{\Min}$ depending only on $\alpha$ and $C$.
\end{prop}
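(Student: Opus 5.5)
We argue by contradiction, assuming $M_{1}(s)=\Min$ for all $s\geq 0$, and aim at a time-integrated quadratic estimate of the form
\[
   \int_{0}^{t}M_{1}(s)^{2}\,ds\leq C_{*}\,\Min ,\qquad t\geq 0,
\]
with $C_{*}$ depending only on $\alpha$ and $C$. This immediately gives $t(\Min)^{2}\leq C_{*}\Min$, hence $\Tgel\leq C_{*}/\Min$. Unlike Theorem~\ref{thm:main}, the estimate must see all pairs $(i,j)$ with $i\geq j$, not only the diagonal blocks. So instead of one test sequence $i\wedge a$ per scale, I would use a single \emph{global} superposition over dyadic scales,
\[
   \psi_{i}:=\sum_{k\geq 0}w_{k}\,(i\wedge 2^{k}),
\]
with weights $w_{k}\geq 0$ chosen so that $\sup_{i}\psi_{i}/i\leq\sum_{k}w_{k}<\infty$ and $\psi$ is bounded (for instance $w_{k}=2^{-k}(1+k)^{-\beta}$ with a suitable $\beta\in(1,\alpha)$). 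Then Lemma~\ref{lem:weak} applies. By linearity, the flux satisfies $\Phi_{\psi}=\sum_{k}w_{k}\Phi_{i\wedge 2^{k}}$, and each summand is nonpositive by the computation in Proposition~\ref{prop:mass}.

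\textbf{Step 1: the global energy identity.} Exactly as in \eqref{eq:ineq-a}, I obtain
\[
   \int_{0}^{t}\sum_{i\geq j\geq 1}\Bigl[\sum_{k:\,2^{k}\leq i}w_{k}\,(j\wedge 2^{k})\Bigr]\Lambda_{i,j}\,\omega_{i}\omega_{j}\,ds\leq\sum_{i}\psi_{i}\win\leq\Bigl(\sum_{k}w_{k}\Bigr)\Min .
\]
The interchange of the $k$-sum with the time integral and the $(i,j)$-sums is justified by Tonelli, since all terms are nonnegative.

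\textbf{Step 2: pointwise lower bound on the bracket.} For $i\geq j$, I would use $\Lambda_{i,j}\geq C\,i\log^{\alpha}(e+j)$. I then aim to show that the weight $W(i,j):=\sum_{2^{k}\leq i}w_{k}(j\wedge 2^{k})$ satisfies
\[
   i\,\log^{\alpha}(e+j)\,W(i,j)\geq c\,i\,j
\]
for some $c=c(\alpha,\beta)>0$. For $2^{k}\leq j$ the summand is $w_{k}2^{k}=(1+k)^{-\beta}$. Taking the top term $k\approx\log_{2} j$, which is available because $j\leq i$, this contributes about $(\log j)^{-\beta}$ times a constant. On the other hand, $j\log^{\alpha}(e+j)(\log j)^{-\beta}\gtrsim j$ provided $\beta\leq\alpha$. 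The role of $\alpha>1$ is to allow $\beta>1$, which is exactly what keeps $\sum_{k}w_{k}2^{k}=\sum_{k}(1+k)^{-\beta}$ finite, so that $\psi$ is bounded. This is the precise point where the critical threshold enters. The bracket is then bounded below by a multiple of $ij$, uniformly over pairs $i\geq j$, including very unequal sizes.

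\textbf{Step 3: closing.} Combining Steps 1 and 2 with \eqref{eq:sym-bound} applied to $x_{i}=i\,\omega_{i}(s)$ (via $\sum_{i\geq j}ij\,\omega_{i}\omega_{j}\geq\frac12 M_{1}(s)^{2}$ on finite truncations, then monotone convergence) gives
\[
   \frac{cC}{2}\int_{0}^{t}M_{1}(s)^{2}\,ds\leq\Bigl(\sum_{k}w_{k}\Bigr)\Min .
\]
Under the no-gelation hypothesis this forces $t\leq C_{*}/\Min$ with $C_{*}=2\sum_{k}w_{k}/(cC)$, which is the claimed bound. (Actually this even holds without assuming the contradiction hypothesis, since $M_{1}(s)\leq\Min$ by Proposition~\ref{prop:mass} and one only needs $M_{1}=\Min$ on $[0,\Tgel)$.)

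\textbf{Main obstacle.} I expect the main obstacle to be Step 2, namely the bookkeeping of the dyadic weights. The requirements pull in opposite directions. The bound $W(i,j)\log^{\alpha}(e+j)\gtrsim j$ must hold uniformly, including small $j$ and non-dyadic $j$, and the parity between $\log_{2}$ and $\log(e+\cdot)$ must be handled. At the same time $\psi$ must stay bounded and $\sum_{k}w_{k}<\infty$. If the clean choice $w_{k}=2^{-k}(1+k)^{-\beta}$ fails to give a bound linear in $j$, I would fall back to a lower bound $W(i,j)\gtrsim j\,\log^{-\beta}(e+j)$. I would then apply \eqref{eq:sym-bound} to $x_{i}=i\,\omega_{i}\log^{(\alpha-\beta)/2}(e+i)\geq i\,\omega_{i}$, which still dominates $M_{1}(s)^{2}/2$.
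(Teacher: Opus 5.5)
\textbf{There is a genuine gap in Step~2.} The failure is structural, not a matter of bookkeeping.

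With $w_{k}=2^{-k}(1+k)^{-\beta}$, the top dyadic term ($2^{k_{0}}\leq j<2^{k_{0}+1}$) contributes $w_{k_{0}}(j\wedge 2^{k_{0}})=w_{k_{0}}2^{k_{0}}=(1+k_{0})^{-\beta}$. What you need is $\log^{\alpha}(e+j)\,W(i,j)\geq c\,j$. The factor $j$ in your comparison $j\log^{\alpha}(e+j)(\log j)^{-\beta}\gtrsim j$ is simply not there.

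In fact, for $i\geq j$ you have $W(i,j)\leq\sum_{k}w_{k}(j\wedge 2^{k})=\psi_{j}\leq\sum_{k}(1+k)^{-\beta}$. So for the kernel $\Lambda_{i,j}=C(i+j)\log^{\alpha}(e+i\wedge j)$ you only get $\Lambda_{i,j}W(i,j)\leq 2C\bigl(\sum_{k}(1+k)^{-\beta}\bigr)i\log^{\alpha}(e+j)$. This is not bounded below by $c\,ij$ for large $j$. Your fallback $W(i,j)\gtrsim j\log^{-\beta}(e+j)$ is ruled out by the same inequality.

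No bounded test sequence can do the job. Suppose $\|\psi\|_{\infty}<\infty$ and $-\Phi_{\psi}(i,j)=\psi_{j}+j(\psi_{i}-\psi_{i+1})\geq c\,j\log^{-\alpha}(e+j)$ for all $i\geq j$. Choose $j$ with $c\,j\log^{-\alpha}(e+j)>2\|\psi\|_{\infty}$. Then $\psi_{i}-\psi_{i+1}\geq\frac{c}{2}\log^{-\alpha}(e+j)$ for every $i\geq j$, so $\psi_i\to-\infty$, a contradiction. Hence insisting on a bounded $\psi$, so as to apply Lemma~\ref{lem:weak} once, is incompatible with Step~2. Your account of where $\alpha>1$ enters is also off. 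It is not what makes $\psi$ bounded. It is what makes $\sup_{i}\psi_{i}/i$ finite for a $\psi$ that must grow at least like $j\log^{-\alpha}(e+j)$.

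\textbf{How to repair it.} Drop the factor $2^{-k}$ and take $w_{k}=(1+k)^{-\alpha}$.
Then $\psi_{i}\leq\bigl(\sum_{n\geq 1}n^{-\alpha}\bigr)i$, which is finite exactly because $\alpha>1$.
The top term now gives $W(i,j)\geq w_{k_{0}}2^{k_{0}}\geq\frac{j}{2}(1+\log_{2}j)^{-\alpha}\geq c_{\alpha}\,j\log^{-\alpha}(e+j)$, hence $\Lambda_{i,j}W(i,j)\geq c_{\alpha}C\,ij$ for $i\geq j$.
This $\psi$ is unbounded, of order $i\log^{1-\alpha}(e+i)$, so you cannot apply Lemma~\ref{lem:weak} to it directly. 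Instead:
\begin{itemize}
\item apply the lemma to the truncation $\psi^{(K)}_{i}:=\sum_{k\leq K}w_{k}(i\wedge 2^{k})$;
\item note that the same flux lower bound holds whenever $j\leq i$ and $j\leq 2^{K}$;
\item keep only the pairs $j\leq i\leq 2^{K}$ and use \eqref{eq:sym-bound} to get $\frac{c_{\alpha}C}{2}\int_{0}^{t}\bigl(\sum_{i\leq 2^{K}}i\,\omega_{i}(s)\bigr)^{2}ds\leq\bigl(\sum_{n\geq1}n^{-\alpha}\bigr)\Min$;
\item let $K\to\infty$ by monotone convergence.
\end{itemize}

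This is precisely the paper's mechanism. Its $\psi_{a}(i)=i\,\mathbf{1}_{i\leq a}\sum_{k\geq i}\frac{1}{k\log^{\alpha}(e+k)}$ is likewise of order $i\log^{1-\alpha}(e+i)$ below the cutoff $a$, and $a$ plays the role of your $2^{K}$. The paper's remark that $\psi_{a}$ is bounded uniformly in $a$ is inaccurate. However, the estimate it actually uses, $\sum_{i}\psi_{a}(i)\win\leq C'_{\alpha}\Min$, does hold, because $\psi_{a}(i)\leq i\sum_{k\geq 1}\frac{1}{k\log^{\alpha}(e+k)}$.

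What your corrected dyadic version buys is that the sign of the flux and its lower bound come straight from the computation in Proposition~\ref{prop:mass}. That replaces the paper's explicit flux formula and its two-case analysis ($i<2j$ versus $i\geq 2j$).
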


\begin{proof}
Fix $a\in\N$ and define the bounded nonneg test sequence
\begin{equation*}
   \psi_{a}(i)
   :=i\,\mathbf{1}_{i\leq a}
   \sum_{k=i}^{\infty}\frac{1}{k\log^{\alpha}(e+k)},
   \quad i\geq 1.
\end{equation*}
Since $\alpha>1$ the series converges, so $\psi_a$ is well-defined and
bounded uniformly in $a$.  For $1\leq j\leq i\leq a-1$, the flux
coefficient~\eqref{eq:Phi-def} satisfies
\begin{align}
   \Phi_{\psi_a}(i,j)
   &=j\!\left[\sum_{k=i+1}^{\infty}\frac{1}{k\log^{\alpha}(e+k)}
   -\frac{1}{\log^{\alpha}(e+i)}\right]
   -j\sum_{k=j}^{\infty}\frac{1}{k\log^{\alpha}(e+k)}
   \notag\\
   &=-j\sum_{k=j}^{i}\frac{1}{k\log^{\alpha}(e+k)}
   -\frac{j}{\log^{\alpha}(e+i)},
   \label{eq:flux-nocutoff}
\end{align}
which is strictly negative.  For $i\geq a$ the flux coefficient is
nonpositive and all pairs contribute nonpositively to the sum
in~\eqref{eq:weak}.  We establish a uniform lower bound on
$|\Phi_{\psi_a}(i,j)|$ for $i\geq j\geq 1$ by treating two cases.

\medskip
\noindent\textit{Case~1: $j\leq i<2j$.}
Retaining only the second term in~\eqref{eq:flux-nocutoff} and using
$\log^{\alpha}(e+i)\leq\log^{\alpha}(e+2j)\leq 2^{\alpha}\log^{\alpha}(e+j)$
(since $\log(e+2j)\leq\log 2+\log(e+j)\leq 2\log(e+j)$ for $j\geq 1$):
\begin{equation}\label{eq:Phi-case1}
   -\Phi_{\psi_a}(i,j)
   \geq\frac{j}{\log^{\alpha}(e+i)}
   \geq\frac{j}{\log^{\alpha}(e+2j)}
   \geq\frac{j}{2^{\alpha}\log^{\alpha}(e+j)}.
\end{equation}

\medskip
\noindent\textit{Case~2: $i\geq 2j$.}
Restricting the sum in~\eqref{eq:flux-nocutoff} to $k\in[j,2j]$ (all
summands are positive, so restricting gives a lower bound) and using
$\log^{\alpha}(e+k)\leq\log^{\alpha}(e+2j)\leq 2^{\alpha}\log^{\alpha}(e+j)$
for $k\leq 2j$ yield
\begin{align*}
   -\Phi_{\psi_a}(i,j)
   &\geq j\sum_{k=j}^{2j}\frac{1}{k\log^{\alpha}(e+k)}
   \geq\frac{j}{\log^{\alpha}(e+2j)}\sum_{k=j}^{2j}\frac{1}{k}.
\end{align*}
Comparing with the integral $\int_{j}^{2j+1}dx/x=\log((2j+1)/j)\geq\log 2$ gives
\begin{equation}\label{eq:Phi-case2}
   -\Phi_{\psi_a}(i,j)
   \geq\frac{j\log 2}{\log^{\alpha}(e+2j)}
   \geq\frac{j\log 2}{2^{\alpha}\log^{\alpha}(e+j)}.
\end{equation}
Combining~\eqref{eq:Phi-case1} and~\eqref{eq:Phi-case2}, there exists a
constant $C_{\alpha}:=\log 2/2^{\alpha}>0$ depending only on $\alpha$ such
that
\begin{equation*}
   -\Phi_{\psi_a}(i,j)\geq\frac{C_{\alpha}j}{\log^{\alpha}(e+j)}
   \quad\text{for all }i\geq j\geq 1.
\end{equation*}
Using $i\wedge j=j$ for $i\geq j$ and the kernel lower bound
$\Lambda_{i,j}\geq C(i+j)\log^{\alpha}(e+j)$, we obtain
\begin{align*}
   -\Lambda_{i,j}\,\Phi_{\psi_a}(i,j)
   &\geq C(i+j)\log^{\alpha}(e+j)\cdot\frac{C_{\alpha}j}{\log^{\alpha}(e+j)}
   =CC_{\alpha}j(i+j)
   \geq CC_{\alpha}\,ij,
\end{align*}
where the last inequality uses $j(i+j)=ij+j^{2}\geq ij$.

Applying the weak identity~\eqref{eq:weak} with $\psi_a$, using the
nonnegativity $\sum_i\psi_a(i)\omega_i(t)\geq 0$, and the symmetrization
identity $2\sum_{i\geq j}ij\,\omega_i\omega_j=\sum_{i,j=1}^a ij\,\omega_i\omega_j$, we arrive
\begin{equation}\label{eq:M1sq-nocutoff}
   \frac{CC_{\alpha}}{2}\int_{0}^{t}[M_1^{(a)}(s)]^{2}\,ds
   \leq\sum_{i=1}^{\infty}\psi_a(i)\,\win.
\end{equation}
For the right-hand side of~\eqref{eq:M1sq-nocutoff}, we bound
$\psi_a(i)$ uniformly.  Since $x\mapsto[x\log^{\alpha}(e+x)]^{-1}$ is
decreasing, comparison with the integral $\int_{i}^{\infty}
[x\log^{\alpha}(e+x)]^{-1}dx$ (which converges for $\alpha>1$) gives
\[
   \sum_{k=i}^{\infty}\frac{1}{k\log^{\alpha}(e+k)}
   \leq\frac{C_{\alpha}'}{i\log^{\alpha-1}(e+i)}
\]
for a constant $C_{\alpha}'>0$ depending only on $\alpha$.  Hence
\[
   \psi_a(i)
   \leq\frac{C_{\alpha}'}{\log^{\alpha-1}(e+i)}\leq C_{\alpha}',
\]
using $\log(e+i)\geq 1$.

Therefore
\begin{equation}\label{eq:psi-bound}
   \sum_{i=1}^{\infty}\psi_a(i)\,\win
   \leq C_{\alpha}'\sum_{i=1}^{\infty}i\,\omega_i^{\mathrm{in}}
   =C_{\alpha}'\Min.
\end{equation}
Combining~\eqref{eq:M1sq-nocutoff} and~\eqref{eq:psi-bound}
and letting $a\to\infty$ by monotone convergence gives
\begin{equation*}
   \int_{0}^{t}[M_1(s)]^{2}\,ds
   \leq\frac{2C_{\alpha}'\Min}{CC_{\alpha}}
   =:C_{*}\Min
   \quad\text{for all }t\geq 0.
\end{equation*}
This bound is independent of $t$.  If $\Tgel=+\infty$ then $M_1(s)=\Min$
for all $s\geq 0$, giving $t[\Min]^{2}\leq C_{*}\Min$, i.e.,
$t\Min\leq C_{*}$ for all $t>0$, a contradiction for $t>C_{*}/\Min$.
Hence $\Tgel<\infty$ and $\Tgel\leq C_{*}/\Min$.
\end{proof}

Having established gelation for kernels growing at least like
$(i+j)\log^{\alpha}(e+i\wedge j)$ with $\alpha>1$, we now turn to the
opposite direction and identify a class of slowly growing kernels for
which mass is conserved for all time, thereby identifying the borderline
between the gelling and non-gelling regimes.

\begin{prop}\label{prop:nongel}
Suppose there exists $C_{1}>0$ such that
$\Lambda_{i,j}\leq C_{1}(i+j)\log(e+i\wedge j)$ for all $i,j\geq 1$.
If $\Min+M_{2}^{\mathrm{in}}<\infty$, where
$M_{2}^{\mathrm{in}}:=\sum_{i=1}^{\infty}i^{2}\win$, then every
solution satisfies $M_{1}(t)=\Min$ for all $t\geq 0$.
\end{prop}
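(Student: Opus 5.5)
We follow the classical route for non-gelation: first obtain a time-local bound on a superlinear moment, then use it to show that the mass-loss term produced by the truncated weak formulation vanishes as the truncation is removed. By Proposition~\ref{prop:mass} it suffices to prove $M_{1}(t)\geq\Min$.

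\textbf{Step 1: the defect identity.} Apply Lemma~\ref{lem:weak} with $\psi_{i}=i\wedge N$. As in the proof of Proposition~\ref{prop:mass}, $\Phi_{\psi}(i,j)=-(j\wedge N)$ for $i\geq N$ and $\Phi_{\psi}(i,j)=0$ otherwise. This gives
\[
   \sum_{i}(i\wedge N)\,\omega_{i}(t)=\sum_{i}(i\wedge N)\,\win
   -\int_{0}^{t}\sum_{\substack{i\geq N\\ j\leq i}}(j\wedge N)\,\Lambda_{i,j}\,\omega_{i}\,\omega_{j}\,ds .
\]
As $N\to\infty$, the left side tends to $M_{1}(t)$ by monotone convergence, and the first term on the right tends to $\Min$. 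So everything reduces to showing that the defect integral $D_{N}(t)$ tends to $0$.

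\textbf{Step 2: a bound on the superlinear moment.} Use the bounded test sequence $\psi_{i}=(i\wedge N)^{2}$. For $j\leq i<N$ one has $\psi_{i+1}-\psi_{i}=2i+1$, so
\[
   \Phi_{\psi}(i,j)=(2i+1)j-j^{2}\leq 3ij .
\]
For $i\geq N$, $\Phi_{\psi}(i,j)\leq 0$. With the kernel bound, $\Lambda_{i,j}\leq 2C_{1}\,i\log(e+j)$ for $j\leq i$, so the flux is at most
\[
   6C_{1}\sum_{j\leq i}i^{2}j\log(e+j)\,\omega_{i}\omega_{j}.
\]
This exceeds $M_{2}M_{1}$ by a logarithmic factor, so a plain Gronwall argument on $M_{2}$ does not close. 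That is the main obstacle.

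To get around it, the first thing to try is to replace $i^{2}$ by the weight $\phi_{i}=i\log(e+i)$, truncated at $N$. Then $\phi_{i+1}-\phi_{i}\leq\log(e+i)+1$, and for $j\leq i<N$,
\[
   \Phi_{\phi}(i,j)\leq j\,\log(e+i)+j-j\log(e+j)\leq j\bigl(\log(e+i)-\log(e+j)\bigr)+j .
\]
Multiplying by $\Lambda_{i,j}\leq 2C_{1}\,i\log(e+j)$ produces a term of order $i\,j\log(e+i)\log(e+j)\,\omega_{i}\omega_{j}$. Splitting the product $\log(e+i)\log(e+j)\leq\tfrac12\bigl(\log^{2}(e+i)+\log^{2}(e+j)\bigr)$ does not help either.

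A better variant is to run the estimate in the form used by Ball--Carr. Bound the flux by $C\,M_{\phi}(s)\,\bigl(M_{1}+\sum_{j}j\log(e+j)\omega_{j}\bigr)\lesssim M_{\phi}^{2}$, where $M_{\phi}:=\sum_{i}\phi_{i}\omega_{i}\leq M_{2}$. Combined with $M_{2}^{\mathrm{in}}<\infty$, which gives $M_{\phi}(0)<\infty$, the resulting Riccati inequality yields $\sup_{[0,T_{*}]}M_{\phi}<\infty$ on a short interval whose length depends only on $M_{\phi}(0)$.

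To globalize, we would instead use the $M_{2}$ equation directly. Its flux is bounded by $C\sum_{i}i^{2}\log(e+i)\,\omega_{i}\cdot M_{1}$, and we close this via a log-Sobolev-type interpolation $\sum_{i}i^{2}\log(e+i)\,\omega_{i}\lesssim M_{2}\log(e+M_{2}/M_{1})$. If that interpolation fails to hold, the fallback is to accept only local-in-time control and iterate using mass nonincrease from Proposition~\ref{prop:mass}. The resulting Osgood-type inequality $M_{2}'\leq C\,M_{2}\log(e+M_{2})$ has global solutions, which gives $\sup_{[0,T]}M_{2}<\infty$ for every $T$.

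\textbf{Step 3: the defect vanishes.} With $\sup_{[0,T]}M_{2}<\infty$, bound
\[
   D_{N}(t)\leq 2C_{1}\int_{0}^{t}\sum_{i\geq N}i\,\omega_{i}\sum_{j\leq i}j\log(e+j)\,\omega_{j}\,ds
   \leq 2C_{1}\int_{0}^{t}\Bigl(\sum_{i\geq N}i\,\omega_{i}\Bigr)M_{2}(s)\,ds .
\]
Since $\sum_{i\geq N}i\,\omega_{i}\leq M_{2}/N$, this gives $D_{N}(t)\leq 2C_{1}\,t\,\sup M_{2}^{2}/N\to 0$. Hence $M_{1}(t)=\Min$.

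The delicate point throughout is Step 2, namely the logarithmic excess in the second-moment growth. We expect to handle it with the Osgood-type inequality, which is exactly where the single power of $\log$ in the hypothesis is used.
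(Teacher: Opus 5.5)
Your Steps~1 and~3 are sound. In Step~3, note that $\log(e+1)>1$, so bound $\sum_{j}j\log(e+j)\,\omega_{j}$ by $M_{1}+M_{2}$ rather than $M_{2}$; this only changes constants. The gap is in Step~2, which never actually establishes $\sup_{[0,T]}M_{2}<\infty$, and Step~3 depends entirely on that bound.

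The interpolation you propose, $\sum_{i}i^{2}\log(e+i)\,\omega_{i}\lesssim M_{2}\log(e+M_{2}/M_{1})$, is false. For $\omega_{1}=1$, $\omega_{n}=n^{-2}$ one has $M_{1}\leq 2$ and $M_{2}=2$, while the left-hand side exceeds $\log(e+n)\to\infty$. The damage is done one line earlier, when you use $\log(e+j)\leq\log(e+i)$ to move the logarithm onto the larger cluster. This produces a moment strictly above $M_{2}$ that no combination of $M_{1}$ and $M_{2}$ can control.

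The fallback fails as well. The Riccati inequality $M_{\phi}'\lesssim M_{\phi}^{2}$ controls $M_{\phi}$ only on an interval of length comparable to $1/M_{\phi}$ at its left endpoint. Restarting it from the larger value reached at the end of each interval gives intervals whose lengths sum to at most the Riccati blow-up time $\sim 1/M_{\phi}(0)$. Mass non-increase gives no information on $M_{\phi}$. So the Osgood inequality $M_{2}'\leq CM_{2}\log(e+M_{2})$ is asserted but never derived.

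The missing idea is to keep the logarithm on the smaller index, where the hypothesis $\Lambda_{i,j}\leq C_{1}(i+j)\log(e+i\wedge j)$ places it, and control it by Jensen's inequality. Your first bound already factorizes:
\[
   6C_{1}\sum_{j\leq i}i^{2}j\log(e+j)\,\omega_{i}\omega_{j}
   \leq 6C_{1}\Bigl(\sum_{i}i^{2}\omega_{i}\Bigr)\Bigl(\sum_{j}j\log(e+j)\,\omega_{j}\Bigr).
\]
Apply Jensen's inequality to the concave map $x\mapsto\log(e+x)$ under the probability measure proportional to $j\,\omega_{j}$. This gives
\[
   \sum_{j}j\log(e+j)\,\omega_{j}\leq M_{1}\log(e+M_{2}/M_{1})\leq\Min\log(e+M_{2}/\Min).
\]
The last step holds because $z\mapsto z\log(e+b/z)$ is nondecreasing and $M_{1}\leq\Min$ by Proposition~\ref{prop:mass}.

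Done with truncated sums, as in the paper, this yields $X'\leq 6C_{1}\Min\,X\log(e+X/\Min)$ for the truncated second moment $X$. Since $\int^{\infty}dx/(x\log(e+x/\Min))=\infty$, the resulting bound on $[0,T]$ holds for every $T$ and is uniform in the truncation. This is the paper's route, and it is exactly where the single power of the logarithm is used. With that bound in hand, your Step~3 completes the proof; it is a slightly more quantitative variant of the paper's dominated-convergence step.
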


\begin{proof}
The argument adapts Steps~1--3 of Proposition~7 in~\cite{F2025} to
the discrete setting.

We begin with a discrete Jensen-type bound.  For any nonneg sequence
$(\omega_{i})$ and any $A\geq M_{1}[\omega]$, applying Jensen's
inequality to the concave function $\log(e+\cdot)$ under the probability
measure $\nu_{N}=Z_{N}^{-1}\sum_{i=1}^{N}i\omega_{i}\delta_{i}$, where
$Z_{N}=\sum_{i=1}^{N}i\omega_{i}$, yields
\begin{equation}\label{eq:disc-Jensen}
   \sum_{i=1}^{N}i\log(e+i)\,\omega_{i}
   \leq Z_{N}\log\!\left(e+Z_{N}^{-1}\sum_{i=1}^{N}i^{2}\omega_{i}\right)
   \leq A\log\!\left(e+A^{-1}\sum_{i=1}^{N}i^{2}\omega_{i}\right),
\end{equation}
where the second inequality uses the fact that $z\mapsto z\log(e+z^{-1}b)$
is nondecreasing in $z>0$ for each fixed $b>0$.

We next show that $\sup_{[0,T]}M_{2}(t)<\infty$ for every $T>0$.
Fix $N\in\N$ and apply Lemma~\ref{lem:weak} with
$\psi_{i}^{(N)}:=i^{2}\mathbf{1}_{i\leq N}$.  For $i\geq j\geq 1$
with $i\leq N-1$, a direct computation of the flux coefficient gives
\begin{align*}
   \Phi_{\psi^{(N)}}(i,j)
   &=\bigl[(i+1)^{2}-i^{2}\bigr]j-j^{2}
   =(2i+1)j-j^{2}
   =2ij+j(1-j)
   \leq 2ij,
\end{align*}
where the last inequality uses $j\geq 1$ so that $j(1-j)\leq 0$.
For $i\geq N$, the flux coefficient is nonpositive and contributes
nothing to the upper bound.  Hence, using the kernel bound
$\Lambda_{i,j}\leq C_{1}(i+j)\log(e+j)$ for $i\geq j$ and the
estimate $ij(i+j)\leq 2i^{2}j$ (valid for $i\geq j\geq 1$) yield
\begin{align}
   \frac{d}{dt}\sum_{i=1}^{N}i^{2}\omega_{i}(t)
   &\leq 4C_{1}\sum_{\substack{i\geq j\geq 1\\i\leq N}}
   i^{2}j\log(e+j)\,\omega_{i}\omega_{j}
   \leq 4C_{1}\!\left(\sum_{i=1}^{N}i^{2}\omega_{i}\right)
   \!\left(\sum_{j=1}^{N}j\log(e+j)\,\omega_{j}\right).
   \label{eq:M2-ode}
\end{align}
Applying~\eqref{eq:disc-Jensen} to the second factor on the right
of~\eqref{eq:M2-ode} with $A=M_{1}(t)\leq\Min$ and writing
$X(t):=\sum_{i=1}^{N}i^{2}\omega_{i}(t)$ gives
\[
   \frac{d}{dt}X(t)
   \leq 4C_{1}\Min\cdot X(t)\log\!\left(e+\frac{X(t)}{\Min}\right).
\]
The Gronwall lemma applied to this differential inequality gives
$\sup_{t\in[0,T]}X(t)\leq C_{T}<\infty$ for every $T>0$, where
$C_{T}$ depends only on $T$, $\Min$, and $M_{2}^{\mathrm{in}}$.
Letting $N\to\infty$ by monotone convergence yields
$\sup_{[0,T]}M_{2}(t)<\infty$.
We finally establish mass conservation.  Apply Lemma~\ref{lem:weak}
with $\psi_{i}^{(N)}:=i\wedge N$.  From the proof of
Proposition~\ref{prop:mass}, the flux coefficient satisfies
$\Phi_{\psi^{(N)}}(i,j)\leq 0$ for all $i\geq j\geq 1$, and
$|\Phi_{\psi^{(N)}}(i,j)|\leq i\wedge j$ uniformly in $N$.
To let $N\to\infty$ via dominated convergence in the integral
of~\eqref{eq:weak}, it suffices to verify that
\[
   \int_{0}^{T}\sum_{i\geq j\geq 1}(i\wedge j)\Lambda_{i,j}
   \omega_{i}(s)\omega_{j}(s)\,ds<\infty.
\]
For $i\geq j$, the kernel bound gives
$(i\wedge j)\Lambda_{i,j}\leq C_{1}j(i+j)\log(e+j)\leq 2C_{1}ij\log(e+j)$.
Since $\log(e+j)\leq 1+j$, we obtain
\[
   \sum_{i\geq j\geq 1}(i\wedge j)\Lambda_{i,j}\omega_{i}\omega_{j}
   \leq 2C_{1}\bigl(M_{1}^{2}+M_{1}M_{2}\bigr)
   \leq 2C_{1}\bigl([\Min]^{2}+\Min M_{2}(s)\bigr),
\]
which is locally integrable in time since $\sup_{[0,T]}M_{2}(t)<\infty$.
Dominated convergence and the monotone limit $i\wedge N\nearrow i$
then give $M_{1}(t)=\Min$ for all $t\geq 0$.
\end{proof}

Our results therefore strictly contain those of~\cite{AG 2023} as a
special case, extend the gelation criterion to the range
$\alpha+\beta\in(1,2)$ which lies outside the scope of~\cite{DAS 2022},
and furthermore cover entirely new classes of kernels, including
kernels with a logarithmic correction of the form
$(i\wedge j)\log^{\alpha}(e+i\wedge j)$ and kernels carrying the ratio
suppression factor $(i\wedge j/i\vee j)^{\theta}$, neither of which
were treated in any prior work on the discrete OHS system.
Finally, we remark that our method requires $\Delta(a)>0$, that is,
$\Lambda_{i,j}>0$ for all $i,j\in[a,\lfloor ra\rfloor]$, and in
particular on the diagonal $i=j$.  Kernels vanishing on the diagonal,
such as $\Lambda_{i,j}=|i-j|^{\gamma}$, lie outside the present
theory; their treatment remains an open problem for the discrete OHS
system, as it does for the continuous Smoluchowski equation
in~\cite{F2025}.

\section{Positivity of Solutions}
\label{sec:positivity}

Having characterised gelation in the preceding sections, we now address
a complementary structural question: for $t>0$, which cluster
concentrations $\omega_{i}(t)$ are strictly positive, and how does this
depend on the initial data?  For the discrete Smoluchowski coagulation equation
this problem was studied by Da~Costa~\cite{DC1995}, who proved that
under the strict positivity assumption $\Lambda_{i,j}>0$ for all $i$ and $j$,
the set $\mathcal{J}(t):=\{j\in\N:\omega_{j}(t)>0\}$ is independent of
$t>0$ and equals the additive monoid
$\mathrm{span}_{\N_{0}}(P)=\{j=\sum_{i}n_{i}p_{i}:p_{i}\in P,\,
n_{i}\in\N_{0},\,\max_{i}n_{i}>0\}$ generated by the initial
support $P=\mathcal{J}(0)$.  This set can be arithmetically complex and
depend sensitively on the structure of $P$: for instance,
$P=\{2,5\}$ gives $\mathrm{span}_{\N_{0}}(P)=\{2,4,5,6,7,8,\ldots\}$,
which excludes all odd integers smaller than $5$.

For the discrete OHS system~\eqref{eq:DOHSE}, the
situation is strikingly simpler.  The gain term in~\eqref{eq:DOHSE}
has a strictly sequential structure: species $i$ can only be produced
from species $i-1$, not from an arbitrary pair of species whose sizes
sum to $i$ as in Smoluchowski coagulation.  This chain structure forces
$\mathcal{J}(t)$ to be a half-line for all $t>0$, regardless of the
detailed shape of $P$ beyond its minimum.  Continuing the example
above, with $P=\{2,5\}$, the discrete OHS system produces
$\mathcal{J}(t)=\{2,3,4,5,6,\ldots\}$ for all $t>0$, so that species
$3$, which is unreachable in the Smoluchowski case, becomes positive
at any positive time.

Throughout this section we impose the strict positivity condition
\begin{equation}\label{eq:pos-kernel}
   \Lambda_{i,j}>0\quad\text{for all }i,j\geq 1,
\end{equation}
and we consider a solution $(\omega_{i}(t))_{i\in\N,\,t\geq 0}$ to
\eqref{eq:DOHSE}--\eqref{eq:IC} in the sense of
Definition~\ref{def:solution}, with $\Min<\infty$ and nontrivial
initial data $\omega^{\mathrm{in}}\neq 0$.  We define the
\emph{positivity set}, the \emph{initial support}, and the
\emph{minimal initial cluster size} by
\begin{equation*}
   \mathcal{J}(t):=\{i\in\N:\omega_{i}(t)>0\},
   \quad
   P:=\mathcal{J}(0)=\{i\in\N:\omega_{i}^{\mathrm{in}}>0\},
   \quad
   p^{*}:=\min P.
\end{equation*}

The analysis rests on the \emph{variation-of-constants} representation
obtained by rewriting~\eqref{eq:DOHSE} as the first-order linear ODE
$\dot{\omega}_{i}(t)=R_{i}(t)-\varphi_{i}(t)\,\omega_{i}(t)$, where
the \emph{total loss rate} and the \emph{gain rate} are respectively
\begin{equation}\label{eq:phi-R}
   \varphi_{i}(t)
   :=\sum_{j=1}^{i}j\,\Lambda_{i,j}\,\omega_{j}(t)
    +\sum_{j=i}^{\infty}\Lambda_{i,j}\,\omega_{j}(t),
   \qquad
   R_{i}(t)
   :=\begin{cases}
     \displaystyle
     \omega_{i-1}(t)\sum_{j=1}^{i-1}j\,\Lambda_{i-1,j}\,\omega_{j}(t)
     & i\geq 2,\\[6pt]
     0 & i=1.
     \end{cases}
\end{equation}
Setting $E_{i}(t):=\exp\!\bigl(\int_{0}^{t}\varphi_{i}(s)\,ds\bigr)$,
which satisfies $E_{i}(t)\geq 1$ and $E_{i}(t)<\infty$ for all finite
$t$ by Definition~\ref{def:solution}. Next, a straightforward calculation yields
\begin{equation}\label{eq:variation}
   \omega_{i}(t)\,E_{i}(t)
   =\omega_{i}^{\mathrm{in}}
   +\int_{0}^{t}E_{i}(s)\,R_{i}(s)\,ds,
   \qquad i\in\N,\quad t\geq 0.
\end{equation}
Since $E_{i}(t),R_{i}(t)\geq 0$ and $\omega_{i}^{\mathrm{in}}\geq 0$,
identity~\eqref{eq:variation} recovers the nonnegativity of solutions.
Moreover, since $E_{i}(t)>0$, we have $\omega_{i}(t)>0$ if and only
if the right-hand side of~\eqref{eq:variation} is strictly positive.

A key structural consequence of assumption~\eqref{eq:pos-kernel} is
the following equivalence, which has no counterpart in the Smoluchowski
model, we have
\begin{equation}\label{eq:Ri-equiv}
   R_{i}(t)>0\;\iff\;\omega_{i-1}(t)>0,
   \qquad i\geq 2,\quad t\geq 0.
\end{equation}
To see this, note that if $\omega_{i-1}(t)>0$ then the term $j=i-1$
contributes $(i-1)\,\Lambda_{i-1,i-1}\,\omega_{i-1}(t)>0$ to the
sum in~\eqref{eq:phi-R}, giving $R_{i}(t)>0$; conversely, if
$\omega_{i-1}(t)=0$ then the product $\omega_{i-1}(t)\cdot(\cdots)=0$
forces $R_{i}(t)=0$.  Equivalence~\eqref{eq:Ri-equiv} is the engine
of the entire positivity argument: whether species $i$ is being produced
at time $t$ depends solely on whether species $i-1$ is present, and
not on any arithmetic combination of smaller species as in the
Smoluchowski gain term.  We now prove the main result through a series
of lemmas.

\begin{lemma}\label{lem:monotone}
For all $t\geq\tau\geq 0$, $\mathcal{J}(t)\supseteq\mathcal{J}(\tau)$.
\end{lemma}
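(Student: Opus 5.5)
The plan is to derive monotonicity of the positivity set from the variation-of-constants identity~\eqref{eq:variation}, applied not from time $0$ but from an arbitrary initial time $\tau$. The first step is to restart the identity at $\tau$. Set $E_{i}(\tau,t):=\exp\bigl(\int_{\tau}^{t}\varphi_{i}(s)\,ds\bigr)$. This quantity is finite and at least $1$ for $t\geq\tau$, by the integrability condition in Definition~\ref{def:solution}. The same computation that gave~\eqref{eq:variation}, namely differentiating $\omega_{i}E_{i}$ using local absolute continuity, yields
\[
   \omega_{i}(t)\,E_{i}(\tau,t)=\omega_{i}(\tau)+\int_{\tau}^{t}E_{i}(\tau,s)\,R_{i}(s)\,ds .
\]
Alternatively, one can obtain it from~\eqref{eq:variation} directly: evaluate that identity at $t$ and at $\tau$, subtract, and divide by $E_{i}(\tau)>0$, using $E_{i}(t)=E_{i}(\tau)E_{i}(\tau,t)$.

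The second step is to read off positivity. Fix $i\in\mathcal{J}(\tau)$, so that $\omega_{i}(\tau)>0$. We have $E_{i}(\tau,s)\geq 1$, and $R_{i}(s)\geq 0$ because the solution is nonnegative by definition. So the integral term is nonnegative, and the right-hand side is at least $\omega_{i}(\tau)>0$. Dividing by $E_{i}(\tau,t)<\infty$ gives
\[
   \omega_{i}(t)\geq \omega_{i}(\tau)\exp\Bigl(-\int_{\tau}^{t}\varphi_{i}(s)\,ds\Bigr)>0,
\]
hence $i\in\mathcal{J}(t)$. This argument uses neither the strict positivity assumption~\eqref{eq:pos-kernel} nor the equivalence~\eqref{eq:Ri-equiv}; the only inputs are nonnegativity of the gain term and finiteness of the loss integral.

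The only delicate point is justifying the product rule for $\omega_{i}E_{i}$. Both factors are locally absolutely continuous: $E_{i}$ is because $\varphi_{i}\in L^{1}_{\mathrm{loc}}$ by Definition~\ref{def:solution}. A product of locally absolutely continuous functions is again locally absolutely continuous, and its a.e. derivative is $E_{i}(\dot\omega_{i}+\varphi_{i}\omega_{i})=E_{i}R_{i}$. Integrating this derivative from $\tau$ to $t$ gives the restarted identity, and I expect no further obstacle.
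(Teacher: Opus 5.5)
Your argument is correct and is essentially the paper's. The paper drops the nonnegative term $\int_\tau^t E_i(s)R_i(s)\,ds$ from \eqref{eq:variation} to get $\omega_i(t)E_i(t)\geq\omega_i(\tau)E_i(\tau)>0$, which is your restarted inequality multiplied by $E_i(\tau)$; your version additionally gives the explicit bound $\omega_i(t)\geq\omega_i(\tau)\exp\bigl(-\int_\tau^t\varphi_i(s)\,ds\bigr)$. One caveat, which you share with the paper: the integrability condition in Definition~\ref{def:solution} as written controls $\int_0^T\omega_i\varphi_i\,ds$, not $\int_0^T\varphi_i\,ds$, so your claim that $\varphi_i\in L^1_{\mathrm{loc}}$ ``by Definition~\ref{def:solution}'' (equivalently, finiteness of $E_i$) does not follow from it and is in effect an extra standing assumption, the usual $\sum_j\Lambda_{i,j}\omega_j\in L^1(0,T)$.
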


\begin{proof}
Let $i\in\mathcal{J}(\tau)$, so $\omega_{i}(\tau)>0$.  Since $R_{i}\geq 0$,
identity~\eqref{eq:variation} gives, for all $t\geq\tau$,
\[
   \omega_{i}(t)E_{i}(t)
   =\omega_{i}^{\mathrm{in}}+\int_{0}^{t}E_{i}(s)R_{i}(s)\,ds
   \geq\omega_{i}^{\mathrm{in}}+\int_{0}^{\tau}E_{i}(s)R_{i}(s)\,ds
   =\omega_{i}(\tau)E_{i}(\tau)>0,
\]
so $\omega_{i}(t)>0$ and thus $i\in\mathcal{J}(t)$.
\end{proof}

\begin{lemma}\label{lem:below-pstar}
For every $i<p^{*}$ and every $t\geq 0$, $\omega_{i}(t)=0$.
\end{lemma}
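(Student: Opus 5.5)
We argue by strong induction on $i\in\{1,\dots,p^{*}-1\}$, using the variation-of-constants identity~\eqref{eq:variation} together with the equivalence~\eqref{eq:Ri-equiv}. If $p^{*}=1$ there is nothing to prove, so assume $p^{*}\geq 2$. By definition of $p^{*}=\min P$, every $i<p^{*}$ satisfies $\omega_{i}^{\mathrm{in}}=0$. Hence, for such $i$, identity~\eqref{eq:variation} reduces to
\[
   \omega_{i}(t)\,E_{i}(t)=\int_{0}^{t}E_{i}(s)\,R_{i}(s)\,ds ,
\]
and it suffices to show that $R_{i}\equiv 0$ on $[0,\infty)$.

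\emph{Base case $i=1$.} Here $R_{1}\equiv 0$ by the definition in~\eqref{eq:phi-R}. Therefore $\omega_{1}(t)E_{1}(t)=0$, and since $E_{1}(t)\geq 1$ we get $\omega_{1}(t)=0$ for all $t\geq 0$.

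\emph{Inductive step.} Suppose $2\leq i<p^{*}$ and that $\omega_{i-1}(s)=0$ for all $s\geq 0$ (note that $i-1<p^{*}$, so the inductive hypothesis applies). Then $R_{i}(s)=\omega_{i-1}(s)\sum_{j=1}^{i-1}j\Lambda_{i-1,j}\omega_{j}(s)=0$ for every $s$. This is the easy direction of~\eqref{eq:Ri-equiv}, and it does not even require~\eqref{eq:pos-kernel}. The displayed identity then gives $\omega_{i}(t)E_{i}(t)=0$, and $E_{i}(t)\geq 1$ yields $\omega_{i}(t)=0$.

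The only point that needs care is the validity of~\eqref{eq:variation} itself, namely that $E_{i}$ is finite and locally absolutely continuous. This follows from the integrability condition in Definition~\ref{def:solution}: it makes $\varphi_{i}\in L^{1}_{\mathrm{loc}}$, so the product $\omega_{i}E_{i}$ is locally absolutely continuous with derivative $E_{i}R_{i}$ almost everywhere. I take this as already established in the text preceding Lemma~\ref{lem:monotone}. With it in hand, the induction is immediate, and I do not expect any real obstacle beyond noting that the $i=1$ gain term is empty.
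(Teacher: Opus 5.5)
Your proposal is correct and matches the paper's proof essentially verbatim: induction on $i<p^{*}$, using $R_{1}\equiv 0$ for the base case, $\omega_{i-1}\equiv 0\Rightarrow R_{i}\equiv 0$ for the step, and \eqref{eq:variation} with $\omega_{i}^{\mathrm{in}}=0$ to conclude. One caveat on your side remark, which the paper's text shares: the integrability condition in Definition~\ref{def:solution} controls $\omega_{i}\varphi_{i}$ rather than $\varphi_{i}$, so it does not by itself guarantee $E_{i}(t)<\infty$; you can avoid $E_{i}$ entirely here, because $\omega_{i}^{\mathrm{in}}=0$ and $R_{i}\equiv 0$ give $\omega_{i}(t)=-\int_{0}^{t}\omega_{i}(s)\varphi_{i}(s)\,ds\leq 0$ by absolute continuity, and hence $\omega_{i}(t)=0$ by nonnegativity.
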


\begin{proof}
If $p^{*}=1$ there is nothing to prove, so assume $p^{*}\geq 2$.
We argue by induction on $i=1,\ldots,p^{*}-1$.  For the base case
$i=1$: since $1<p^{*}$ implies $1\notin P$, we have
$\omega_{1}^{\mathrm{in}}=0$; moreover $R_{1}\equiv 0$ by
definition~\eqref{eq:phi-R}.  Hence~\eqref{eq:variation} gives
$\omega_{1}(t)E_{1}(t)=0$ for all $t\geq 0$, and since $E_{1}(t)>0$,
$\omega_{1}(t)=0$ for all $t\geq 0$.  For the induction step, assume
$\omega_{k}(t)=0$ for all $t\geq 0$ and all $k\leq i-1$, where
$i<p^{*}$.  In particular $\omega_{i-1}(t)=0$ for all $t$, so
$R_{i}(t)=0$ for all $t$ by~\eqref{eq:phi-R}.  Since also
$\omega_{i}^{\mathrm{in}}=0$ (as $i<p^{*}$ implies $i\notin P$),
identity~\eqref{eq:variation} gives $\omega_{i}(t)E_{i}(t)=0$, hence
$\omega_{i}(t)=0$ for all $t\geq 0$.
\end{proof}

\begin{lemma}\label{lem:propagation}
Under assumption~\eqref{eq:pos-kernel}, $\omega_{i}(t)>0$ for all
$i\geq p^{*}$ and all $t>0$.
\end{lemma}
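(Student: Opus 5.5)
We argue by induction on $i\geq p^{*}$, proving the statement ``$\omega_{i}(t)>0$ for all $t>0$'', and using the variation-of-constants identity~\eqref{eq:variation} together with the equivalence~\eqref{eq:Ri-equiv}.

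\emph{Base case $i=p^{*}$.} Since $p^{*}\in P$, we have $\omega_{p^{*}}^{\mathrm{in}}>0$. Because $E_{p^{*}}(s)R_{p^{*}}(s)\geq 0$, identity~\eqref{eq:variation} gives
\[
\omega_{p^{*}}(t)E_{p^{*}}(t)\geq\omega_{p^{*}}^{\mathrm{in}}>0 \quad\text{for every } t\geq 0.
\]
As $E_{p^{*}}(t)<\infty$, it follows that $\omega_{p^{*}}(t)>0$. Equivalently, $p^{*}\in\mathcal{J}(0)$, and Lemma~\ref{lem:monotone} gives the same conclusion.

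\emph{Induction step.} Assume $\omega_{i-1}(t)>0$ for all $t>0$, where $i-1\geq p^{*}$. By~\eqref{eq:Ri-equiv}, $R_{i}(s)>0$ for every $s>0$. Moreover $E_{i}(s)\geq 1$, so the integrand $E_{i}(s)R_{i}(s)$ is strictly positive on $(0,t)$. It is also measurable and locally integrable. Indeed, $R_{i}(s)\leq\omega_{i-1}(s)\varphi_{i-1}(s)$, and this is locally integrable by the integrability condition in Definition~\ref{def:solution} applied at index $i-1$. A measurable function that is strictly positive on a set of positive Lebesgue measure has strictly positive integral, hence $\int_{0}^{t}E_{i}(s)R_{i}(s)\,ds>0$ for every $t>0$. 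Identity~\eqref{eq:variation} then yields $\omega_{i}(t)E_{i}(t)>0$. Since $E_{i}(t)<\infty$, we conclude $\omega_{i}(t)>0$, which completes the induction.

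The main point requiring care is that positivity of $R_{i}$ must hold on a set of positive measure in $(0,t)$, not merely at isolated times. The induction hypothesis supplies exactly this, since it asserts positivity of $\omega_{i-1}$ at \emph{every} $t>0$. This is why the statement is for $t>0$ rather than $t\geq 0$: for $i>p^{*}$ with $i\notin P$ we have $\omega_{i}(0)=0$. Two further checks are needed. First, $E_{i}(t)$ is finite, which holds because $\varphi_{i}\in L^{1}_{\mathrm{loc}}$ by Definition~\ref{def:solution}. Second, the equivalence~\eqref{eq:Ri-equiv} uses only $\Lambda_{i-1,i-1}>0$, which follows from~\eqref{eq:pos-kernel}.
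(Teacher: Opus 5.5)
Your proof is correct and follows the paper's argument essentially verbatim. It uses induction on $i\geq p^{*}$, reads the base case off~\eqref{eq:variation} using $\omega_{p^{*}}^{\mathrm{in}}>0$, and in the induction step uses~\eqref{eq:Ri-equiv} to get $E_{i}R_{i}>0$ on $(0,t)$ and hence a strictly positive integral; your extra check that $E_{i}R_{i}$ is locally integrable (via $R_{i}\leq\omega_{i-1}\varphi_{i-1}$) is a harmless refinement the paper leaves implicit, and, like the paper, you take finiteness of $E_{i}(t)$ from the solution concept.
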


\begin{proof}
We proceed by induction on $i\geq p^{*}$.  For the base case
$i=p^{*}$: since $p^{*}\in P$ we have $\omega_{p^{*}}^{\mathrm{in}}>0$,
and since $R_{p^{*}}\geq 0$, identity~\eqref{eq:variation} gives
\[
   \omega_{p^{*}}(t)E_{p^{*}}(t)
   \geq\omega_{p^{*}}^{\mathrm{in}}>0
   \quad\text{for all }t\geq 0.
\]
Since $E_{p^{*}}(t)<\infty$, we conclude $\omega_{p^{*}}(t)>0$ for
all $t\geq 0$, and in particular for all $t>0$.

For the induction step, let $i>p^{*}$ and suppose $\omega_{i-1}(s)>0$
for all $s>0$.  Fix any $t>0$.  By the induction hypothesis,
$\omega_{i-1}(s)>0$ for all $s\in(0,t)$, and by
equivalence~\eqref{eq:Ri-equiv} this gives $R_{i}(s)>0$ for all
$s\in(0,t)$.  Since $E_{i}(s)\geq 1>0$, the integrand
$E_{i}(s)R_{i}(s)$ is strictly positive on the open interval $(0,t)$,
and hence
\[
   \omega_{i}(t)E_{i}(t)
   \geq\int_{0}^{t}E_{i}(s)R_{i}(s)\,ds>0.
\]
Since $E_{i}(t)<\infty$ we conclude $\omega_{i}(t)>0$.  As $t>0$ was
arbitrary, the claim holds for all $t>0$, completing the induction.
\end{proof}

\begin{theorem}\label{thm:positivity}
Assume~\eqref{eq:pos-kernel} and let $(\omega_{i}(t))$ be a solution
to~\eqref{eq:DOHSE}--\eqref{eq:IC} with $\Min<\infty$ and
$\omega^{\mathrm{in}}\neq 0$.  Then, for all $t>0$,
\[
   \mathcal{J}(t)=\{i\in\N:i\geq p^{*}\},
\]
and this set is independent of $t$, independent of the coagulation
rates $\Lambda_{i,j}$, and independent of the structure of $P$ beyond
its minimum $p^{*}$.  In particular, $\mathcal{J}(t)$ is always an
infinite set for $t>0$.
\end{theorem}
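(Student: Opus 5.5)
The proof will assemble the two preceding lemmas, which together pin down $\mathcal{J}(t)$ exactly. First I will check that $p^{*}$ is well defined. Since $\omega^{\mathrm{in}}\neq 0$ and all $\win\geq 0$, the initial support $P$ is a nonempty subset of $\N$, so $p^{*}=\min P$ exists by well-ordering. This is the only place the nontriviality hypothesis enters. The assumption $\Min<\infty$ is not used beyond its role in the standing setup of the section.

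Fix $t>0$ and prove the two inclusions separately.
\begin{itemize}
\item \emph{The inclusion $\mathcal{J}(t)\subseteq\{i\geq p^{*}\}$.} If $i\in\mathcal{J}(t)$ then $\omega_{i}(t)>0$. Lemma~\ref{lem:below-pstar} says $\omega_{i}(t)=0$ whenever $i<p^{*}$ (for every $t\geq 0$), so necessarily $i\geq p^{*}$.
\item \emph{The inclusion $\{i\geq p^{*}\}\subseteq\mathcal{J}(t)$.} By Lemma~\ref{lem:propagation}, which uses the strict positivity assumption~\eqref{eq:pos-kernel}, $\omega_{i}(t)>0$ for every $i\geq p^{*}$ and every $t>0$.
\end{itemize}
Together these give the identity $\mathcal{J}(t)=\{i\in\N:i\geq p^{*}\}$ for all $t>0$.

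The qualitative statements then follow by inspection. The right-hand side depends only on $p^{*}$. So it is independent of $t>0$, it does not depend on the particular values of the $\Lambda_{i,j}$ (only on their positivity), and it does not depend on $P$ beyond its minimum. It is also infinite, being a tail of $\N$.

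As a consistency remark, I may note that Lemma~\ref{lem:monotone} is compatible with, but not needed for, this conclusion. The set is constant for $t>0$ and contains $P$ because $P\subseteq\{i\geq p^{*}\}$.

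There is no real obstacle here: all the analytic work was done in Lemmas~\ref{lem:below-pstar} and~\ref{lem:propagation}. The only point needing care is to apply Lemma~\ref{lem:propagation} for strictly positive times. At $t=0$ the set $\mathcal{J}(0)=P$ may have gaps, which is exactly why the statement is restricted to $t>0$.
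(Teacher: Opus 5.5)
Your proposal is correct and matches the paper's proof exactly: Lemma~\ref{lem:below-pstar} gives $\mathcal{J}(t)\subseteq\{i\geq p^{*}\}$, Lemma~\ref{lem:propagation} gives the reverse inclusion for $t>0$, and the qualitative claims are read off from the right-hand side. Your explicit check that $p^{*}=\min P$ exists because $\omega^{\mathrm{in}}\neq 0$ is a small point the paper leaves implicit.
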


\begin{proof}
Lemma~\ref{lem:below-pstar} gives $\mathcal{J}(t)\subseteq\{i\in\N:i\geq p^{*}\}$
for all $t\geq 0$, while Lemma~\ref{lem:propagation} gives the
reverse inclusion $\{i\in\N:i\geq p^{*}\}\subseteq\mathcal{J}(t)$ for
all $t>0$.  Together these yield $\mathcal{J}(t)=\{i\in\N:i\geq p^{*}\}$
for all $t>0$.  Since the right-hand side is independent of $t$ and
of $(\Lambda_{i,j})$, the same holds for $\mathcal{J}(t)$.  Finally,
the set $\{i\in\N:i\geq p^{*}\}$ is infinite.
\end{proof}

\begin{remark}
The contrast with the Smoluchowski positivity theorem of Da
Costa~\cite{DC1995} is sharp. In the Smoluchowski
model the characterisation $\mathcal{J}(t)=\mathrm{span}_{\N_{0}}(P)$
depends on the \emph{full arithmetic structure} of $P$: two different
choices of $P$ with the same minimum $p^{*}$ can give rise to
entirely different positivity sets for $t>0$. By contrast,
Theorem~\ref{thm:positivity} shows that in the discrete OHS
system $\mathcal{J}(t)$ is determined by $p^{*}$ alone, independently
of any further arithmetic properties of $P$. 
\end{remark}

\subsection*{Acknowledgements} MA expresses deep gratitude to Jindal Global Business School, O.P. Jindal Global University, for its invaluable support in providing essential resources.

\end{document}